\documentclass[11pt]{amsart}
\setlength{\textwidth}{360pt}
\setlength{\textheight}{615pt}
\usepackage{latexsym,cite, amssymb,amsmath , amsthm , amsfonts , ifpdf}
\usepackage{color}
\usepackage{graphicx}
\usepackage{multicol}
\usepackage{verbatim}
\usepackage[utf8x]{inputenc}
\usepackage{tikz}

\usepackage{amsmath}
\usepackage{amssymb}



\usetikzlibrary{arrows , shapes , trees , backgrounds}
\usetikzlibrary{intersections}

\makeatletter
\let\NaT@parse\undefined
\makeatletter

\usepackage [ colorlinks, citecolor=blue, anchorcolor=blue, linkcolor=blue ]{ hyperref }


\theoremstyle{plain}
\newtheorem{theorem}{Theorem}[section]
\newtheorem*{theorem*}{Theorem}

\newtheorem{pro}[theorem]{Proposition}
\newtheorem{Def}[theorem]{Definition}
\newtheorem{lem}[theorem]{Lemma}

\newtheorem{prop}[theorem]{Proposition}

\theoremstyle{definition}
\newtheorem{definition}[theorem]{Definition}
\newtheorem*{Def*}{Definition}

\newtheorem{remark}[theorem]{Remark}

\numberwithin{equation}{section}
\newcommand{\bpo}{\begin{pro}}
	\newcommand{\epo}{\end{pro}}
\newcommand{\be}{\begin{equation}}
	\newcommand{\ene}{\end{equation}}
\newcommand{\br}{\begin{remark}}
	\newcommand{\er}{\end{remark}}
\newcommand{\bl}{\begin{lem}}
	\newcommand{\el}{\end{lem}}
\newcommand{\bd}{\begin{Def}}
	\newcommand{\ed}{\end{Def}}
\newcommand{\ben}{\begin{enumerate}}
	\newcommand{\een}{\end{enumerate}}
\newcommand{\bp}{\begin{proof}}
	\newcommand{\ep}{\end{proof}}
\newcommand{\beq}{\begin{equation*}}
	\newcommand{\eeq}{\end{equation*}}
\newcommand{\bear}{\begin{eqnarray*}}
	\newcommand{\eear}{\end{eqnarray*}}
\newcommand{\bt}{\begin{theorem}}
	\newcommand{\et}{\end{theorem}}
\newcommand{\bst}{\begin{split}}
	\newcommand{\est}{\end{split}}

\newcommand{\bal}{\begin{aligned}}
	\newcommand{\eal}{\end{aligned}}

\renewcommand{\P}{\partial}
\newcommand{\F}[2]{\frac{#1}{#2}}
\newcommand{\la}{\langle}
\newcommand{\ra}{\rangle}

\newcommand{\R}{\mathbb{R}}

\newcommand{\bnb}{\bar{\nabla}}
\newcommand{\nb}{\nabla}

\newcommand{\ex}{e^{-\F{|\vec{F}|^{2}}{4}}}

\newcommand{\Cd}{covariant derivative}

\newcommand{\wrt}{with respect to}
\newcommand{\Sc}{\varepsilon}

\newcommand{\vp}{\varphi}

\newcommand{\Lp}{Lipschitz}


\def\XXint#1#2#3{{\setbox0=\hbox{$#1{#2#3}{\int}$}
		\vcenter{\hbox{$#2#3$}}\kern-.5\wd0}}


\def\XXint#1#2#3{{\setbox0=\hbox{$#1{#2#3}{\int}$}
        \vcenter{\hbox{$#2#3$}}\kern-.5\wd0}}


\begin{document}

	\title[Dirichlet problem]{The Dirichlet problem for the minimal surface system on smooth domains}
	\author{Caiyan Li and Hengyu Zhou}
	\address[Caiyan Li]{School of Mathematical Sciences, Xiamen University, Xiamen, 361005, P. R. China}
	\email{caiyanli@xmu.edu.cn}
	\address[Hengyu Zhou]{ College of Mathematics and Statistics, Chongqing University, Huxi Campus, Chongqing, 401331, P. R. China}
	\address{Chongqing Key Laboratory of Analytic Mathematics and Applications, Chongqing University, Huxi Campus, Chongqing, 401331, P. R. China}
	\email{zhouhyu@cqu.edu.cn}
	\subjclass[2010]{}
	\begin{abstract}
		In this paper, we propose a new assumption \eqref{condition:A} that involves a small oscillation and $C^2$ norms for  maps from  smooth bounded domains into Euclidean spaces. Furthermore, by assuming that the domain has non-negative Ricci curvature, we establish the Dirichlet problem for the minimal surface system via the mean curvature flow (MCF) with boundary. The long-time existence of such flow is derived using Bernstein-type theorems of higher codimensional self-shrinkers in the whole space and the half-space. Another novel aspect is that our hypothesis imposes no restriction on the diameter of the domains, which implies an existence result for an exterior Dirichlet problem of the minimal surface system.
	\end{abstract}
	\date{\today}

	\maketitle

	\section{Introduction}
	In differential geometry, minimal hypersurfaces and high codimension minimal submanifolds share many natural similarities but also exhibit essential differences for many classical problems. In this paper, we focus on one of those problems: the Dirichlet problem for the minimal surface system (MSS)
    	\begin{equation}\label{DP:MSS}
        \left\{
        \begin{aligned}
            g^{ij}(f)f^A_{ij}&=0\quad A=1,2,\cdots,m\text{\quad on~} E , (m\geq 2),\\
            f &=\psi \text{\quad on~}\P E ,
        \end{aligned}
        \right.
    \end{equation}
    which seeks higher codimensional minimal graphs with prescribed boundary data. For a more precise description we refer readers  to \eqref{DP:MSSB} in Section \ref{Sec pre}.

    In the codimension one case ($m=1$), \eqref{DP:MSS} (see also \eqref{DP:MSE}) is called the Dirichlet problem for the minimal surface equation. Jenkins and Serrin \cite{Jenkins-Serrin1968} first showed its existence and uniqueness for any Euclidean mean-convex domains with continuous boundary data. For arbitrary bounded domains, small oscillation conditions on the boundary data are necessary, as shown by Jenkins-Serrin \cite{Jenkins-Serrin1968} and Williams \cite{Williams1984}.

    In the higher codimensional case ($m\geq 2$), the situation becomes intricate due to the non-scalar nature of the solutions to \eqref{DP:MSS}. If the boundary data exhibits sufficiently large oscillations, Lawson and Osserman \cite{Lawson-Osserman1977} constructed numerous examples demonstrating the non-existence of solutions to  \eqref{DP:MSS} on Euclidean balls with $m\geq 3$. For recent advancements in this area, readers are referred to the work of Zhang \cite{Zhang2018} and Xu,Yang and Zhang \cite{XuYangZhang2019}. \\

    The first general existence result for \eqref{DP:MSS} on Euclidean convex domains was derived by Wang \cite{Wang2004}, based on higher codimensional mean curvature flows (MCF) in \eqref{MCF3}. The long-time existence of such flows at boundary requires a $C^{1,\alpha}$ gradient estimate near the boundary, as discussed by Thorpe \cite{Thorpe12} via the classical elliptic equation techniques.   Ding, Jost and Xin \cite{Ding-Jost-Xin2023} extended Wang's result to the case of Euclidean mean-convex domains. Their $C^{1,\alpha}$ gradient estimates \cite{DJX25} rely on Lieberman's sophisticated approach to parabolic equations \cite{Lieberman1996}. An interesting aspect is that both results in \cite{Wang2004} and \cite{Ding-Jost-Xin2023} impose requirements on the mean-convex property and diameter restrictions. For detailed explanations, see Theorem \ref{codimension:higher} and Remark \ref{remark:motivation}.

    In this paper, to solve \eqref{DP:MSS}, we propose a new oscillation condition that removes the mean-convex assumption by exploring the geometric properties near the boundary. Throughout this paper, any domain $E$ is assumed to have embedded and smooth boundary. We use the following notation: $\bar{E}=E\cup\partial E$, $E_\delta=\{x\in E: d(x,\P E)<\delta\}$, $w(\psi)=\max\limits_{1\leq A\leq m}\{\sup\limits_{\bar{E}}\psi^A -\inf\limits_{\bar{E}} \psi^A  \}$, and $|D^k\psi|$ denotes the norm of the $k$-th covariant derivative  of $\psi$. In particular,
    $ |D \psi| = \sup_{|\tau| = 1} |D\psi(\tau)|$ and $ |D^2 \psi| = \sup_{|\tau| = 1} |D^2 \psi(\tau, \tau)|$,
    where  $\tau$ ranges over unit tangent vectors of $E$.\\ \indent Our first main result is as follows.
	\begin{theorem}\label{main:thm:A}
Let $M^{n}$ be a complete Riemannian manifold with non-negative Ricci curvature and let $E\subset M$ be a bounded smooth domain. Then there exists a constant $\delta_0=\delta_0(n,\P E)>0$ such that for any $\delta\in(0,\delta_0)$ and any smooth map $\psi=(\psi^{1},\ldots,\psi^{m}):\bar{E} \rightarrow \R^m (m\geq 2)$ satisfying
		\begin{equation}\label{condition:A}
			\begin{split}
				\max\{\F{w
					(\psi)}{\delta}&+\sup_{E_{\delta}}|D\psi|+32n\delta \sup_{E_{\delta}}|D^2\psi|,\sup_{\bar{E}}|D\psi|\}<1,\\
			\end{split}
		\end{equation}
there exists a map $f$ from $\bar{E}$ to $\R^m$ in $C^\infty(E;\R^m)\cap C(\bar{E};\R^m)$  solving the  Dirichlet problem for the MSS \eqref{DP:MSS} with boundary data $\psi$. Moreover, $f$ is unique as a strictly length decreasing solution to \eqref{DP:MSS}.
	\end{theorem}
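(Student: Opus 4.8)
\bigskip

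The plan is to solve \eqref{DP:MSS} by running the \mcf\ with Dirichlet boundary data and proving that it exists for all time and subconverges to a minimal graph. Let $\So$ be the graph of $\psi$ over $\bar E$ inside the product $M\times\R^m$ and, after the standard tangential reparametrisation keeping the base fixed, consider the quasilinear system
\begin{equation*}
\partial_t f^A=g^{ij}(f)\,f^A_{ij},\qquad A=1,\dots,m,
\end{equation*}
on $E\times[0,T)$ with $f(\cdot,0)=\psi$ and $f(\cdot,t)|_{\P E}=\psi|_{\P E}$, where $f^A_{ij}$ is the covariant Hessian and $g_{ij}=\bar g_{ij}+\sum_A f^A_i f^A_j$. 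The principal part is the induced Laplacian, so the system is strictly parabolic and short-time existence is classical. The two entries of \eqref{condition:A} play distinct roles. The second, $\sup_{\bar E}|D\psi|<1$, says that $\So$ is \emph{strictly length decreasing}: the singular values of $D\psi$ are $<1$. This property is preserved along the flow by the \Mp\ (in the spirit of Wang and Tsui--Wang), using $\Rc\ge0$ on $M$; it keeps the flow the graph of a map $f(\cdot,t)$ with singular values $<1$, hence $g_{ij}$ uniformly comparable to $\bar g_{ij}$ and the system uniformly parabolic. The first entry of \eqref{condition:A} is used only near $\P E$.

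With uniform ellipticity in hand I would establish three a priori bounds on $[0,T)$. First, a $C^0$ bound: each $f^A$ solves the scalar graphical heat equation $\partial_t f^A=g^{ij}f^A_{ij}$, so the \Mp\ controls $\sup|f|$ and the oscillation of $f$ by $w(\psi)$. Second, an \emph{interior} gradient estimate: away from $\P E$ the strictly length-decreasing condition together with $\Rc\ge0$ yields a bound on $\Og$ (equivalently on $|Df|$) depending only on the distance to $\P E$, via the parabolic evolution inequality satisfied by $\Og$ along the flow. Third — the technical heart, and the only place the full strength of \eqref{condition:A} enters — a \emph{boundary} gradient estimate. Working in Fermi coordinates $(y,s)$ near $\P E$ with $s=d(\cdot,\P E)\in[0,\delta)$, I would construct upper and lower barrier maps of the form
\begin{equation*}
v^A(y,s)=\psi^A(y,0)+a^A s\pm b\,\phi(s),
\end{equation*}
where $a^A$ matches the normal derivative of $\psi$ up to an error $O(\delta\sup_{E_\delta}|D^2\psi|)$, the concave profile $\phi$ with $\phi(0)=0$ absorbs the remaining oscillation $w(\psi)$, and one verifies $g^{ij}v^A_{ij}\lessgtr 0$ uniformly over the ellipticity class $\tfrac12\bar g^{ij}\le g^{ij}\le\bar g^{ij}$ coming from length-decreasing maps. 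The explicit weights $1/\delta$, $1$, $32n\delta$ on $w(\psi)$, $\sup_{E_\delta}|D\psi|$, $\sup_{E_\delta}|D^2\psi|$ and the smallness $\delta<\delta_0(n,\P E)$ are exactly what make these inequalities hold, with $\Rc\ge0$ and the bound on the \sff\ of $\P E$ controlling the Laplacian-comparison term for $s$; note no mean-convexity of $\P E$ is needed. Sandwiching $f(\cdot,t)$ between $v^A_{\pm}$ by the comparison principle for the parabolic system gives a uniform-in-$t$ bound on $|Df|$ near $\P E$, hence a global uniform $C^1$ bound.

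It remains to upgrade $C^1$ to long-time existence and to pass to the limit. Since a quasilinear parabolic \emph{system} lacks the scalar De Giorgi--Nash--Moser theory, I would rule out a finite-time singularity by a blow-up argument: at a putative first singular time, parabolic rescaling together with Huisken monotonicity in suitable local coordinates produces a self-shrinker — a complete one in $\R^{n+m}$ for an interior singularity, or one in a half-space carrying the flattened Dirichlet boundary for a boundary singularity — which is still strictly length decreasing, contradicting the Bernstein-type theorems for higher-codimensional self-shrinkers in $\R^{n+m}$ and in $\H^{n}\times\R^m$ cited in the introduction. Hence the flow is smooth on $E\times[0,\infty)$. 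Because \mcf\ decreases area, $\int_0^\infty\!\!\int_{\Sigma_t}|\vec H|^2\,d\mu\,dt<\infty$, so along some $t_k\to\infty$ the graphs of $f(\cdot,t_k)$ converge, smoothly on compact subsets of $E$ and in $C^0(\bar E)$ by the boundary barriers, to the graph of $f\in C^\infty(E;\R^m)\cap C(\bar E;\R^m)$ with $f=\psi$ on $\P E$ and $g^{ij}(f)f^A_{ij}=0$; this $f$ solves \eqref{DP:MSS}. Finally, uniqueness within the class of strictly length-decreasing solutions follows from a comparison argument: such graphs are calibrated and hence area-minimizing, and a strict version of the calibration inequality — equivalently, the \Mp\ applied to an appropriate quantity built from two such solutions with common boundary data, where the strict length-decreasing bound supplies the needed ellipticity and sign — forces the two maps to coincide.

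The main obstacle is the boundary gradient estimate: building the barriers and checking that the precise coefficients in \eqref{condition:A} make the barrier inequalities hold without any mean-convexity assumption on $\P E$. A close second is confirming that boundary blow-ups are genuinely modeled by self-shrinkers in a half-space to which the quoted half-space Bernstein theorem applies.
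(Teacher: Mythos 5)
Your overall route---running the mean curvature flow with Dirichlet data, preserving the strictly length-decreasing condition by the tensor maximum principle under $\Rc_M\ge 0$, proving a boundary gradient estimate from barriers built out of $d(\cdot,\P E)$ with a concave profile (with no mean-convexity of $\P E$), obtaining long-time existence by blowing up to strictly length-decreasing self-shrinkers over $\R^n$ and over the half-space $\R^n_+$, and extracting a limit from $\int_0^\infty\int_{\Sigma_t}|\vec H|^2\,d\mu\,dt<\infty$ together with the smoothness of Lipschitz length-decreasing weak solutions---is exactly the paper's. Your barrier $\psi^A+a^As\pm b\,\phi(s)$ corresponds to the paper's $\nu\log(1+d/\delta)+\omega^A d/\delta$, your remark that the barrier inequality need only be checked over the ellipticity class of length-decreasing metrics is how the paper decouples the boundary estimate from the unknown gradient (it takes $\mu=1$ and closes the loop by a continuity-in-$t$ argument), and your calibration sketch for uniqueness is the content of the Lee--Wang theorem the paper cites.

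One step as written does not close. You propose to rule out a finite-time singularity because the blow-up limit is a strictly length-decreasing self-shrinker (in $\R^{n+m}$ or in a half-space), ``contradicting the Bernstein-type theorems.'' There is no contradiction there: the Bernstein theorems assert that such a self-shrinker \emph{is} a plane (respectively a half-plane), and planes are perfectly good strictly length-decreasing self-shrinkers, so their existence refutes nothing. The missing ingredient is White's local regularity theorem. Flatness of every tangent flow shows that the Gaussian density equals $1$ at interior space-time points and $\tfrac12$ at boundary points, and it is the $\varepsilon$-regularity threshold (in its interior and boundary versions) that converts this density information into smoothness of the flow up to and beyond time $T$. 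This is exactly how the paper's long-time existence lemma is organized, and without it (or an equivalent $C^{1,\alpha}$/curvature estimate) the statement ``tangent flows are planes'' does not by itself preclude a singularity.
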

  The assumption \eqref{condition:A} ensures that the graph of the initial map $\psi$ is strictly length decreasing. The non-negative Ricci curvature assumption makes the evolution results of Tsui and Wang \cite{Tsui-Wang2004} still remaining valid in our setting. Consequently, the length decreasing property imposed by \eqref{condition:A} will be preserved along the MCF with boundary in Proposition \ref{prop:preserved} and Lemma \ref{first:step:MCF}.\\
  \indent Our long-time existence, which is in fact the $C^{1,\alpha}$ gradient estimate, of the MCF flow \eqref{MCF3} comes from White's regularity results \cite{White2005,White21}. In the proof of Lemma \ref{second:step:MCF} we show that the Gaussian density of the MCF flow \eqref{MCF3} is $\F{1}{2}$ at the boundary or is $1$ in its interior. In fact, we show that  the {\Lp} length decreasing self-shrinker on the whole space and the half-space is a linear space in Theorem \ref{st:A:self-shrinker} and Theorem \ref{lp:self-shrinker}, respectively. Those results have independent interests. \\
  \indent Finally the {\Lp} limit of the MCF flow \eqref{MCF3} is stationary and strictly length decreasing. By the smoothness result in \cite[Theorem 4.1]{Wang2004}, this limit is smooth, which gives the solution in Theorem \ref{main:thm:A}. Its uniqueness follows directly from Lee and Wang \cite[Theorem 3.2]{Lee-Wang2008}. \\
  \indent  Removing the mean-convex requirement is more natural because any constant map is automatically a solution to \eqref{DP:MSS}. In our setting, the boundary $\P E$ may have finite many components. Another new aspect is that $\delta_0$  only reflects the information in a fixed neighborhood of $\P E$ (see Remark \ref{remark:definitioneta}). Therefore, the exterior Dirichlet problem of the MSS can be considered. The following result is inspired by Simon \cite{Simon1983}, in which a more comprehensive characterization of its asymptotic behavior is established in the codimension one case.

	\begin{theorem}\label{main:thm:B}
	Suppose $E\subset\mathbb{R}^n$ is a smooth domain such that $\mathbb{R}^n\setminus E$ is bounded. Fix any constant $c\in (0,1)$. Then there exists a constant  $\tilde{\delta_0}=\tilde{\delta}_0(n,\P E)>0$ such that for any $\delta$ in $(0,\tilde{\delta}_0)$ and any smooth map $\psi:\bar{E} \rightarrow \R^{m}(m\geq 2)$ satisfying
		\begin{equation}\label{condition:B}
			\F{w(\psi)}{\delta}+\sup
			_{\bar{E}}|D\psi|+32 n\delta\sup_{\bar{E} }|D^2\psi|<1-c,
		\end{equation}
	there exists a solution $f:\bar{E}\rightarrow \R^m\in  C^{\infty}(E)\cap C(\bar{E})$ to the above Dirichlet problem of MSS \eqref{DP:MSS}. Moreover, $f$ satisfies the asymptotic behavior
		\begin{equation}
			\lim_{|x|\rightarrow \infty}|Df-l|=0,
		\end{equation}
		where $Df$ denotes the gradient of $f$ and $l$ is a constant vector in $\R^{n+m}$.
	\end{theorem}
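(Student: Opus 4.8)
The plan is to build the solution from an exhaustion of $E$, apply Theorem \ref{main:thm:A} on each piece, pass to a limit using the uniform length decreasing bound, and then analyse the behaviour at infinity by a blow-down argument anchored on the self-shrinker classification of Theorem \ref{st:A:self-shrinker}.

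\textbf{Step 1 (exhaustion).} Write $\Omega=\R^{n}\setminus E\subset B_{R_{0}}$. For every $R>R_{0}$ the set $E_{R}:=E\cap B_{R}$ is a bounded domain with smooth boundary $\partial E\sqcup\partial B_{R}$ (treating components separately if needed), and it carries non-negative Ricci curvature since it sits in flat $\R^{n}$. Because $w(\psi|_{\bar E_{R}})\le w(\psi)$ and $(E_{R})_{\delta}\subset\bar E$, condition \eqref{condition:B} forces the left-hand side of \eqref{condition:A} on $E_{R}$, with data $\psi|_{\partial E_{R}}$, to be $\le 1-c<1$. Moreover $\partial B_{R}$ has second fundamental form $O(1/R)$ and large reach, so --- using Remark \ref{remark:definitioneta}, that $\delta_{0}$ only sees a fixed neighbourhood of the boundary --- the constant $\delta_{0}(n,\partial E_{R})$ is bounded below by some $\tilde\delta_{0}=\tilde\delta_{0}(n,\partial E)>0$ uniformly in large $R$. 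Thus for $\delta<\tilde\delta_{0}$ Theorem \ref{main:thm:A} gives $f_{R}\in C^{\infty}(E_{R})\cap C(\bar E_{R})$ solving \eqref{DP:MSS} on $E_{R}$ with $f_{R}=\psi$ on $\partial E$, strictly length decreasing; and since the gap $c$ in \eqref{condition:B} is quantitative and the length decreasing condition is preserved along the MCF with boundary (Proposition \ref{prop:preserved}, Lemma \ref{first:step:MCF}), there is $\Lambda=\Lambda(c)<1$ bounding all singular values of $Df_{R}$, uniformly in $R$.

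\textbf{Step 2 (limit on $E$).} Where the singular values are $\le\Lambda$ the minimal surface system is uniformly elliptic with constants depending only on $\Lambda$, so interior Schauder estimates bound $\|f_{R}\|_{C^{k}(K)}$ for each $K\Subset E$ and each $k$, once $K\Subset E_{R}$. A diagonal subsequence $f_{R_{j}}\to f$ in $C^{\infty}_{\mathrm{loc}}(E)$, with $f$ solving \eqref{DP:MSS} and still $\Lambda$-length decreasing. The boundary estimates underlying Theorem \ref{main:thm:A} near $\partial E$ depend only on the data on $E_{\delta}$, so the $f_{R}$ have a uniform modulus of continuity up to $\partial E$; hence $f$ extends to $\bar E$ with $f\in C(\bar E)$ and $f=\psi$ on $\partial E$, and it is smooth in $E$ by \cite[Theorem 4.1]{Wang2004}. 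This produces the solution.

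\textbf{Step 3 (asymptotics).} Let $\Sigma=\{(x,f(x)):x\in E\}\subset\R^{n+m}$, a minimal submanifold with compact boundary that is a graph of slope $\le\Lambda<1$, hence of Euclidean volume growth. By the monotonicity formula any blow-down $\lim_{j}\rho_{j}^{-1}\Sigma$ ($\rho_{j}\to\infty$) is a minimal cone $C$; since slope bounds are scale invariant, $C$ is (after checking it is an honest multiplicity-one graph with no vertical sheet) the graph over $\R^{n}$ of a Lipschitz, length decreasing map, and being a cone its position vector is tangential, so $C$ is a self-shrinker in $\R^{n+m}$. By Theorem \ref{st:A:self-shrinker}, $C$ is an $n$-plane $\Gamma_{L}=\{(x,Lx)\}$ with $\|L\|\le\Lambda$; in particular every tangent cone of $\Sigma$ at infinity is a plane of density one. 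To upgrade this to a gradient limit, write $f=Lx+v$ for $|x|$ large; then $v$ solves a uniformly elliptic quasilinear system which, because $|Dv|\to 0$ along $\{|x|=\rho_{j}\}$, is a small perturbation of a constant-coefficient linear system on an exterior domain, and the asymptotics of such systems --- precisely the phenomenon Simon \cite{Simon1983} studies in codimension one --- force $Dv\to 0$, i.e. $Df(x)\to L=:l$. Equivalently, the tangent cone at infinity being a smooth, hence integrable, plane, the Simon--{\L}ojasiewicz uniqueness theorem yields a single limiting plane together with a decay rate for $Df-l$.

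\textbf{Main obstacle.} The crux is the last step: passing from ``every blow-down equals the fixed plane $\Gamma_{L}$'' to the subsequence-independent, pointwise statement $Df\to l$, i.e. uniqueness of the tangent cone at infinity with a rate. What makes this feasible --- and is the genuinely new higher-codimensional ingredient --- is Theorem \ref{st:A:self-shrinker}, which identifies that tangent cone as the most benign possible cone, a plane; after that one is in the classical regime of exterior asymptotics for nearly linear elliptic systems. Subsidiary points needing care are the uniformity in $R$ of the boundary modulus of continuity in Step 2 and the verification in Step 3 that the blow-down cone is a multiplicity-one graph.
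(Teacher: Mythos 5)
Your proposal follows the paper's proof essentially verbatim: exhaust $E$ by $E\cap B_R$, observe that the convexity of $\partial B_R$ makes the boundary-gradient constant of Proposition \ref{keyLboundary:estimate} depend only on $n$ and $\partial E$ so that Theorem \ref{main:thm:A} applies on each $E\cap B_R$ with a $\tilde\delta_0$ uniform in $R$, extract a strictly length decreasing Lipschitz limit that is smooth by the regularity theorem, and blow down to a minimal cone identified as a plane by a Bernstein-type theorem. The only cosmetic differences are that the paper classifies the blow-down cone directly via Theorem \ref{smooth:B} (for Lipschitz length-decreasing minimal graphs) rather than by regarding the cone as a self-shrinker and invoking Theorem \ref{st:A:self-shrinker}, and that the passage from ``every tangent cone at infinity is a plane'' to $Df\to l$ --- which you rightly flag as the delicate point and propose to handle via Simon's exterior asymptotics --- is stated in the paper with even less justification than you give.
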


The proof of Theorem \ref{main:thm:B} can be summarized as follows. Let $Q_r$ denote the intersection of $E$ and the open ball in $\R^{n}$ centered at the origin  with radius $r$. By applying Theorem \ref{main:thm:A} on $Q_r$ and letting $r\rightarrow \infty$, we can obtain the existence part of Theorem \ref{main:thm:B}.
The critical point is to show that for each domain $Q_r$, the constant $\tilde{\delta}_0(n,\P Q_r)$ depends only on $n$ and the boundary $\P E$ for sufficiently large $r$. The asymptotic behavior part follows from Bernstein-type theorems of strictly length decreasing high codimensional minimal graphs; see, for example \cite[Theorem 1.1]{Wang2003},\cite{Xin21}.

 In fact, our work is an effort to find elliptic versions of higher codimensional graphic mean curvature flows in \cite{Wang2002,Tsui-Wang2004}.

 This paper is organized as follows. In Section \ref{Sec pre} we collect some basic facts. In Section \ref{Sec bdry est} we establish the boundary gradient estimate for higher codimensional mean curvature flows, where we propose new oscillation conditions.  In Section \ref{Sec Bernstein} we present some Bernstein-type results of {\Lp} length decreasing self-shrinkers and minimal graphs in half-spaces. Finally, we prove Theorem \ref{main:thm:A} and  Theorem \ref{main:thm:B} in Section \ref{Sec pf thm:A} and Section \ref{Sec  pf thm:B}, respectively.

	\section{Preliminaries}\label{Sec pre}
  In this section we collect some facts for later use.
	\subsection{The Dirichlet problem}\label{Sec 2.1}
 Let $f=(f^1,\cdots, f^m)$ be a smooth map from $E$ to $\R^m$ and $\Gamma(f)$ denote the graph of $f$. Then $\Gamma(f)$ is an $n$-dimensional  submanifold with respect to the product metric on $E\times\R^m$. Fix a local coordinate system $\{x_1,\cdots, x_n\}$ on $E$. Then $\{\F{\P}{\P x_i}+df(\frac{\P }{\P  x_i}),i=1,\cdots, n\}$ is a local frame on $\Gamma(f)$. Denote $\la \F{\P}{\P x_i}, \F{\P}{\P x_j}\ra_{\sigma_M}$ by $\sigma_{ij}$. Here $\sigma_M$ denotes the metric of $M$. Therefore the induced metric on $\Gamma(f)$ can be written as
\begin{equation}\label{coe:g}
	g_{ij}(f)=\sigma_{ij}+\sum_{A=1}^{m}\frac{\P f^A}{\P x_i}\frac{\P f^A}{\P x_j}, \, \text{and}\, (g^{ij}(f))=(g^{ij}(f))^{-1}.
\end{equation}
\bl\label{lm:gij}With the assumption as above, the matrix $(g_{ij})$ in \eqref{coe:g} satisfies
\begin{equation}
    (1+|Df|^2)(\sigma_{ij})\geq (g_{ij})\geq (\sigma_{ij}),\ \text{and}\  (\sigma^{ij})\geq (g^{ij})\geq  \frac{1}{1+|Df|^2}(\sigma^{ij}).
\end{equation}
Here $(\sigma^{ij})=(\sigma_{ij})^{-1}$ and $|Df|=\sup_{|\tau|=1}|Df(\tau)|$.
\el
The proof will be deferred to the frame described in \eqref{relation:A}.

  Let $\bar{\nabla }$ be the connection on $E$. The mean curvature vector $\vec{H}$ of $\Gamma(f)$ in $E\times\R^{m}$ can be computed as
\begin{equation}
\vec{H} =g^{ij}(f )(\bar{\nabla }_{\F{\P }{\P x_i}}\F{\P }{\P x_j}+\F{\P ^2f^A}{\P x_i \P x_j}\F{\P}{\P y^A})^{\bot},
\end{equation}
where $\{\F{\P }{\P y^1},\cdots, \F{\P}{\P y^m}\}$ is the orthonormal frame of $\R^m$ and $( \cdot)^\bot$ denotes the projection onto the normal bundle of $\Gamma(f)$. Set
\begin{equation}
	f_{ij}^A=\F{\P^2f^A}{\P x_i\P x_j}-(\bar{\nabla }_{\F{\P}{\P x_i}}\F{\P}{\P x_j})(f^A),\quad A=1,\cdots, m.
\end{equation}
Therefore, $\Gamma(f)$ is minimal (that is $\vec{H}\equiv 0$) in $E\times \R^m$ if and only if $g^{ij}(f)f_{ij}^A=0$ for each $A=1,\cdots, m$.

 Suppose $\psi=(\psi^{1},\ldots,\psi^{m}):\bar{E}\rightarrow \R^m $ is a smooth map on $\bar{E}=E\cup\partial E$. The Dirichlet problem for the minimal surface system (MSS) over $E$ is defined to seek the solution of
	the following quasi-linear system
	\begin{equation}\label{DP:MSSB}
		\left\{
		\begin{aligned}
			g^{ij}(f)f^A_{ij}&=0\quad A=1,2,\cdots,m\text{\quad on~} E,\\
			f &=\psi \text{\quad on~}\P E,
		\end{aligned}
		\right.
	\end{equation}
	where $(g^{ij}(f))$ is as in \eqref{coe:g}.

	 In the codimension one case ($m=1$), \eqref{DP:MSSB} is simplified into the Dirichlet problem of the minimal surface equation as
	\begin{equation}\label{DP:MSE}
		\left\{
		\begin{aligned}
			 div (\F{D f}{\sqrt{1+|D f|^2}})&=0 ~ \text{\quad on~} E,\\
			f &=\psi \text{\quad on~ }\P E,
		\end{aligned}
		\right.
	\end{equation}
	where $\psi:\bar{E}  \rightarrow \R$ is a continuous map, $f:\bar{E}\rightarrow\R$, and $div$ denotes the divergence of $E$.

	Recall that a smooth domain is \textit{convex} if the second fundamental form of its boundary is non-negative definite, and it is \textit{mean-convex} if the mean curvature of its boundary is non-negative. Some partial results on the Dirichlet problem \eqref{DP:MSE} are recorded as follows.

	\begin{theorem}\label{codimension:one}
	Suppose $E\subset\R^{n}$ is a smooth bounded domain. Then there exists a unique solution $u\in C^\infty(E)\cap C(\bar{E})$ to \eqref{DP:MSE} if one of the following conditions holds:
		\begin{enumerate}
			\item (Jenkins-Serrin \cite{Jenkins-Serrin1968}) the domain $E$ is mean-convex and $\psi$ is continuous;
			\item (Jenkins-Serrin \cite{Jenkins-Serrin1968}) $\psi\in C^2(\bar{E})$ satisfies $w(\psi)<\Sc$ and $\sup_{E}\{|D\psi|+|D^2\psi|\}<K$ for some constant $K>0$ and for some constant $\Sc =\Sc(n,K,E)>0$;
			\item (Williams \cite{Williams1984}) $\psi\in C^1(\bar{E})$ satisfies $ \sup_{\P E}|D\psi|<K$ for any constant $K\in (0, \F{1}{\sqrt{n-1}})$ and $w(\psi)<\Sc$  for some $\Sc=\Sc(n,K,E)>0$.
		\end{enumerate}
    Here $w(\psi)=\sup_{\bar{E} }\psi-\inf_{\bar{E}}\psi$.
 \end{theorem}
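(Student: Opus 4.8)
The plan is to prove all three cases by the standard existence theory for quasilinear elliptic Dirichlet problems (cf.\ Gilbarg--Trudinger, Ch.~13--16): the Leray--Schauder fixed point theorem reduces the existence of a solution of \eqref{DP:MSE} to an a priori bound $\|u\|_{C^{1,\alpha}(\bar{E})}\le C$ for solutions (and, for merely continuous data in case (1), to interior estimates together with a limiting argument), and essentially all the geometric content sits in the boundary gradient estimate. First, the $C^{0}$ bound is immediate: constants solve the minimal surface equation, so the comparison principle gives $\inf_{\P E}\psi\le u\le\sup_{\P E}\psi$, hence $\sup_{E}|u|\le\sup_{\P E}|\psi|$ and $\operatorname{osc}_{E}u\le w(\psi)$. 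Next, I would invoke the classical interior gradient estimate for the minimal surface equation (Bombieri--De~Giorgi--Miranda, Trudinger, Korevaar): on every subdomain $E'$ compactly contained in $E$ one has $\sup_{E'}|Du|\le C\bigl(n,\operatorname{dist}(E',\P E),\sup_{E}|u|\bigr)$. So everything reduces to bounding $|Du|$ on $\P E$.

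For the boundary gradient estimate I would work in a one-sided collar $E_{\delta}=\{x\in E:\ d(x)<\delta\}$, where $d(x)=\operatorname{dist}(x,\P E)$ is smooth for $\delta=\delta(\P E)$ small, and build upper and lower barriers $w^{\pm}=\psi\pm\phi(d)$, with $\psi$ extended to a $C^{2}$ function on $\bar{E}$ (for case (1), to a smooth approximating function) and $\phi$ smooth, increasing, strictly concave, $\phi(0)=0$; this is the Jenkins--Serrin barrier. Applying the minimal surface operator $M[w]=(1+|Dw|^{2})\Delta w-D_{i}w\,D_{j}w\,D_{ij}w$ and using that $\Delta d$ is the mean curvature of the level hypersurface $\{d=\text{const}\}$ (which near $\P E$ is essentially the mean curvature of $\P E$), one finds that the sign of $M[w^{+}]$ is governed by $(1+|\phi'|^{2})\phi''$, corrected by a $\phi'\,\Delta d$ term and by error terms controlled by $\sup_{E_{\delta}}|D\psi|$ and $\sup_{E_{\delta}}|D^{2}\psi|$. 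Choosing $\phi(t)=\nu^{-1}\log(1+kt)$, so that $\phi''+\nu(\phi')^{2}\equiv 0$, reduces everything to one inequality comparing $\nu$ with $\sup_{E_{\delta}}|\Delta d|$ and the $\psi$-errors. Once $M[w^{+}]\le 0=M[u]$ in $E_{\delta}$, while $w^{+}=\psi=u$ on $\P E$ and $w^{+}\ge u$ on $\{d=\delta\}$ (arranged by taking $\phi(\delta)\ge w(\psi)$), the comparison principle forces $w^{+}\ge u$ in $E_{\delta}$; differentiating in the normal direction at a boundary point gives a one-sided bound on $\P_{\nu}u$, and symmetrically from $w^{-}$, whence $\sup_{\P E}|Du|\le C$.

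The three hypotheses enter precisely at the step of fixing the sign of $M[w^{\pm}]$. In case (1), mean-convexity gives the favorable sign for the $\phi'\,\Delta d$ term, so $\nu$ can be chosen depending only on $n$, $\P E$, and the local Lipschitz norm of $\psi$, while $k\to\infty$ realizes any prescribed $\phi(\delta)\ge w(\psi)$; for merely continuous $\psi$ one approximates by smooth maps $\psi_{j}\to\psi$, solves each (smooth-data) Dirichlet problem, and passes to the limit: the interior estimate gives $C^{\infty}_{\mathrm{loc}}(E)$ compactness, and auxiliary oscillation barriers at each boundary point---again built from mean-convexity---control the boundary modulus of continuity uniformly in $j$, so the limit lies in $C^{\infty}(E)\cap C(\bar{E})$ with the right boundary values. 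In cases (2) and (3) no sign is available for $\Delta d$, so $\nu$ must exceed $\sup_{E_{\delta}}|\Delta d|$ plus the $\psi$-errors; this forces $\phi(\delta)\approx\nu^{-1}$ to be small and hence requires $w(\psi)<\Sc$ with $\Sc=\Sc(n,K,E)$ small, the bounds on $|D\psi|,|D^{2}\psi|$ in case (2) (resp.\ on $\sup_{\P E}|D\psi|$ alone in case (3), where $\psi$ is only $C^{1}$, so the barrier must be designed not to involve $D^{2}\psi$) entering through the error terms. In case (3) the sharp threshold $K<1/\sqrt{n-1}$ comes from optimizing the barrier against the quadratic term $D_{i}w\,D_{j}w\,D_{ij}w$, with $\sqrt{n-1}$ the critical tangential slope above which no such barrier exists on a general smooth domain; this refinement, which I would follow \cite{Williams1984} for, is the point I expect to be the main obstacle, the rest being standard machinery.

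Finally, with the global bound $\sup_{E}|Du|\le C$ in hand (cases (1)--(3) for smooth/$C^{2}$ data, cases (2)--(3) in general), the minimal surface equation becomes uniformly elliptic with smooth coefficients; De~Giorgi--Nash--Moser applied to difference quotients yields an interior $C^{1,\alpha}$ estimate, Schauder bootstrapping gives $u\in C^{\infty}(E)$, and for $C^{2}$ (or $C^{1,\alpha}$) data boundary regularity gives $u\in C^{1,\alpha}(\bar{E})$; the Leray--Schauder theorem then produces the solution for smooth/$C^{2}$ data. In cases (2) and (3) the global gradient bound makes $u$ Lipschitz on $\bar{E}$; in case (1) the oscillation barriers of the previous step give $u\in C(\bar{E})$ with the prescribed boundary values; either way $u\in C^{\infty}(E)\cap C(\bar{E})$ solves \eqref{DP:MSE}. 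Uniqueness is the comparison principle: if $u_{1},u_{2}\in C^{\infty}(E)\cap C(\bar{E})$ both solve \eqref{DP:MSE} with the same boundary data, then $u_{1}-u_{2}$ satisfies a linear uniformly elliptic equation with no zeroth-order term in $E$, vanishes on $\P E$, and is continuous on $\bar{E}$, so $u_{1}\equiv u_{2}$.
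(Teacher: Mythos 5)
This theorem appears in the paper purely as quoted background, with no proof beyond the citations to Jenkins--Serrin and Williams, so there is no internal argument to compare against. Your sketch is a faithful outline of exactly those classical arguments: the Leray--Schauder reduction to a priori bounds, the $C^{0}$ bound by comparison with constants, the interior gradient estimate, and the Jenkins--Serrin logarithmic barrier $\psi\pm\nu^{-1}\log(1+kd)$ whose sign analysis is precisely where each of the three hypotheses enters; the approximation argument for merely continuous data in case (1) and the maximum-principle uniqueness are also correctly placed (a small quibble: the linearized operator is only locally uniformly elliptic in $E$, since in case (1) the gradient need not be bounded up to $\P E$, but the weak maximum principle applies all the same, e.g.\ by working on the sets $\{u_{1}>u_{2}+\epsilon\}\subset\subset E$). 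The one genuinely delicate point --- Williams' threshold $K<\F{1}{\sqrt{n-1}}$ in case (3), where $\psi$ is only $C^{1}$ and the barrier therefore cannot be built on a $C^{2}$ extension of $\psi$ --- you explicitly defer to \cite{Williams1984}, which is consistent with the paper's own treatment of the statement as a cited result rather than something it proves.
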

In the higher codimensional case ($m\geq 2$), the Dirichlet problem for the MSS \eqref{DP:MSSB} was resolved for both convex and mean-convex domains in Euclidean spaces, respectively, as follows.
\bt\label{codimension:higher} Let $E$ be a smooth compact domain in $\R^n$ with diameter $\eta$ and let $\psi:\bar{E}\rightarrow \R^m$ be a smooth map. Then the Dirichlet problem \eqref{DP:MSSB} for the MSS is solvable if one of the following two conditions holds:
\begin{enumerate}
    \item (Wang \cite{Wang2004}) $8n\eta\sup_{E}|D^2\psi|+\sqrt{2}\sup_{\P E}|D\psi|<1$ and $E$ is convex;
    \item (Ding-Jost-Xin 
    \cite{Ding-Jost-Xin2023}) $\eta\sup_{E}|D^2\psi|+\sup_{E}|D\psi|<\epsilon_{0}$ and $E$ is mean-convex for some constant $\epsilon_{0}=\epsilon_{0}(n,m,\eta,\psi)>0$.
    \end{enumerate}
\et
\begin{remark} \label{remark:motivation} Comparing with  Theorem \ref{codimension:one} and Theorem \ref{codimension:higher}, it is natural to ask the following questions regarding \eqref{DP:MSSB}. Is the mean-convex  assumption essentially indispensable? How should we understand the role of the diameter of $E$ when solving \eqref{DP:MSSB}? These questions are the main motivations of this paper.
\end{remark}
	\subsection{Mean curvature flow}
	 Let $E\subset M$ be a smooth domain given in Section \ref{Sec 2.1}.
For any smooth map $f:E\rightarrow \R^m$, its singular values $\lambda_1,\cdots, \lambda_n$ are the square root of the eigenvalues of the matrix $(df)_{x}^T(df)_{x}$, where $(df)_x:T_x E\rightarrow  \R^m$ is the differential map of $f$ at $x$.
\begin{definition}\label{def:strictly:decreasing}
We say that a smooth or {\Lp} map $f:E\rightarrow\mathbb{R}^{m}$ is (\textit{strictly}) \textit{length decreasing} if the singular values of $df$ satisfy  $\lambda_i\leq 1(<1-\Sc)$ for  $1\leq i\leq n$, and $f$ is \textit{(strictly) area decreasing} if $\lambda_i\lambda_j\leq1(<1-\Sc)$ for  $1\leq i\neq j\leq n$ whenever the differential $df$ is well-defined. Here $\Sc$ is a fixed positive constant in $(0,1)$.
\end{definition}
 It is easy to verify that this definition is independent of the choice of orthogonal frame near $x$ and $f(x)$.

 Next we define the mean curvature flow (MCF) according to \eqref{DP:MSSB} as in \cite{Wang2004}. Suppose $F_{t}(x)=F(x,t)$, where $F:\bar{E} \times [0,T)\rightarrow \bar{E} \times \R^m$, is a smooth parametric solution to the following mean curvature flow
	\begin{equation}\label{MCF}
	\left\{\begin{aligned}
	(	\frac{\partial F  }{\partial t})^{\perp}  &=H; \\
		F |_{\partial E}  &=Id_{E}\times \psi|_{\partial E} .
	\end{aligned}
	\right.
\end{equation}
Here $ Id_{E}$ is the identity map on $E$.
Under a local coordinate system $\{x_{1},\ldots, x_{n}\}$ on $E$, the mean curvature flow $F(x_{1},\ldots, x_{n},t)=( F^{1},\ldots,F^{m})$ satisfies the following parabolic system:
	\begin{align}\label{MCF2}
		\frac{\partial F ^{A}}{\partial t}=\big(g^{ij}\frac{\partial ^2 F^{A}}{\partial x_{i}\partial x_{j}}\big)^{\perp},\  A=1,\ldots,m.
	\end{align}
	Here $(g^{ij})=(g_{ij})^{-1}$ is the inverse of the induced metric $ g_{ij}=\langle \frac{\partial  F}{\partial x_{i} },\frac{\partial  F}{\partial x_{j} }\rangle$, and $(\cdot )^{\perp}$ denotes the projection onto the normal bundle of $F(E,t)$ in $E\times \R^{m}$.

	Suppose the image of $F=(F^{1},\ldots, F^{m})$ is a graph over  $E$. Then there exists a family of diffeomorphisms $r_{t}:E\rightarrow E$ such that $\tilde{F_{t}}=F_{t}\circ r_{t}$ has the form
	\begin{align*}
	\tilde{F}  (x_{1},\ldots, x_{n},t)=(x_{1}, \ldots, x_{n}, f^{1}(x,t),\ldots,f^{m}(x,t)).
	\end{align*}
Here $f_{t}(x)=f(x,t)$, where $ f=(f^{1}(x,t),\ldots,f^{m}(x,t)): E \times [0,T)\rightarrow \mathbb{R}^{m}$, satisfies  non-parametric system
\begin{equation}\label{MCF3}
\left\{\begin{aligned}
\frac{\partial}{\partial t} f^{A} (x,t )&=g^{ij}\frac{\partial^{2}f^{A}}{\partial x_{i}\partial x_{j}}\quad A=1,\ldots,m, ~\mbox{on} ~E\times [0,T)\\
f (x,t)&= \psi ( x),\quad \,\mbox{on} ~\partial E\times [0,T) .
\end{aligned}
\right.
\end{equation}


Now, we collect some facts about the singular values of a smooth map $f:E\rightarrow \R^{m}$. We use similar notations as in \cite{Tsui-Wang2004}. Suppose $r$ is the rank of differential map $df$ at $x\in E$. By the singular value decomposition theorem, there exist an orthonormal basis $\{a_1,\cdots , a_n\}$ in $T_xE$ and an orthonormal basis $\{a_{n+1},\cdots, a_{n+m}\}$ in $T_{f(x)}\R^m$ such that
   \begin{equation}\label{relation:A}
       df(a_i)=\lambda_i a_{n+i}, 1\leq i\leq r,\ \text{and}\  df(a_i)=0,  r+1\leq i\leq n.
              \end{equation}
Using this frame we show Lemma \ref{lm:gij} as follows.
\begin{proof}[The proof of Lemma \ref{lm:gij}] Fix any $(x,f(x))\in E\times\R^m$. Notice that both inequalities in Lemma \ref{lm:gij} are invariant under any coordinate system composed with an orthonormal transformation in $T_x E$ and $T_{f(x)}\R^m$. We may assume the frame $\{a_1,\cdots, a_n\}$ is equal to $\{\F{\P}{\P x_1},\cdots, \F{\P}{\P x_n}\}$ at $x$, and $\{a_{n+1},\cdots, a_{n+m}\}$ is equal to $\{\F{\P}{\P y^{ 1}},\cdots, \F{\P}{\P y^{ m}}\}$ at $f(x)$. Under this setting,
    $$
    g_{ij}=(1+\lambda_i\lambda_j)\delta_{ij},\ \text{and} \  g^{ij}=\frac{1}{1+\lambda_i\lambda_j}\delta_{ij}.
    $$
Notice that $\lambda _i\leq |Df|$ for $i=1,\cdots,n$. Therefore $(1+|Df|^2)(\delta_{ij})\geq (g_{ij})\geq  (\delta_{ij})$ and $(\delta_{ij})\geq (g^{ij})\geq\F{1}{1+|Df|^2} (\delta_{ij})$. The proof is complete.
    \end{proof}
We view $\Gamma(f)$ as a submanifold in the product manifold $E\times \R^m$. Denote the point $(x,f(x))$ by $p$.  Therefore
	\begin{equation}\label{def:tangent:vector}
		e_{i}:=\left\lbrace
		\begin{aligned} 	&\frac{1}{\sqrt{1+\lambda_{i}^{2}}}(a_{i}+\lambda_{i}a_{n+i}),\quad i=1,\cdots,r;\\
			&a_i,\quad\quad\quad i=r+1,\ldots,n,
		\end{aligned}\right.
	\end{equation}
forms an orthonormal basis of the tangent space $T_{p}\Gamma(f)$,  and
	\begin{equation}\label{def:normal:vector}
		e_{n+i}:=\left\lbrace
		\begin{aligned}
            &\frac{1}{\sqrt{1+\lambda_{i}^{2}}}(-\lambda_{i}a_{i}+a_{m+i})\quad i= 1,\ldots,r;\\
			&a_{m+i},\quad\quad\quad i=r+1,\cdots, n,
		\end{aligned}\right.
	\end{equation}
forms an orthonormal basis of the normal space $N_{p}\Gamma(f)$.

From now on, we assume that the Ricci curvature of the Riemannian manifold $M$ is non-negative, that is $Ric_M\geq 0$. Our purpose is to show Proposition \ref{prop:preserved}, which says that the strictly length decreasing property is preserved along the MCF in $E\times \R$ with fixed boundary.

With the above notation, let $\Omega$ be the volume form of $E$ such that
	$\Omega(a_1,\cdots, a_n) =1$,
and so $*\Omega$, the projection of $\Omega$ onto $\Gamma(f)$, can be written as
	\begin{equation}\label{star Omega}
		*\Omega=\Omega(e_1,\cdots, e_n)=\F{1}{\sqrt{\prod_{i=1}^n(1+\lambda_i^2)}},
	\end{equation}
which is independent of the choice of local orientable frame on $\Gamma(f)$.

For the graphic mean curvature flow $\Sigma_t=F(x,t)=\{(x,f_t(x)):x\in E\}$ defined in \eqref{MCF2}, Wang \cite{Wang2002,Wang2003,Wang2004}  and Tsui-Wang \cite{Tsui-Wang2004}  have already done a lot of calculation. We record the evolution equation of $*\Omega$  for completeness .

In the followings, let $\bar{R}$ and $R$ denote the curvature tensors of $E\times\R^m$ and $E$, respectively. Let the index $i,j,k,l$ range from $1$ to $n$, and let $\alpha$ range from $n+1$ to $n+m$. Suppose the MCF flow $\Sigma_t$ exists on $[0,T)$ in $E\times \R^m$. By noting $(\ln *\Omega)_k= -\sum_{i} \lambda_i h_{n+i,ik}$, for each $k $,  we have
   \begin{equation}\label{evolution Omega}
   	\begin{aligned}
   		&(\F{d}{dt}-\Delta)\ln *\Omega = \sum_{\alpha ,l,k} h_{\alpha  lk}^{2}+2\sum _{k,i<j}\lambda_{i}\lambda_{j}   h_{n+i,jk}h_{n+j,ik}
           \\
     &+\sum_{i,k} \lambda_{i}^{2} h_{n+i,ik}^{2}-\F{1}{*\Omega}\sum_{\alpha,k}(\Omega_{\alpha 2\cdots n}\bar{R}_{\alpha kk1}+\cdots+\Omega_{1\cdots ( n-1)\alpha }\bar{R}_{\alpha kk n})
   	\end{aligned}
   \end{equation}
   where $h_{\alpha,ij}=\la \bar{\nabla}_{\tau_i}\tau_j, \tau_{\alpha} \ra$, $\bar{R}_{\alpha kkj}=\bar{R}(e_{\alpha},e_k,e_k,e_j)$, and $\Omega_{1\cdots (k-1)\alpha (k+1)\cdots n}=\Omega(e_1,\cdots, e_{k-1},e_\alpha, e_{k},\cdots,e_n)$. Putting \eqref{def:tangent:vector} and \eqref{def:normal:vector}  into the curvature term in   \eqref{evolution Omega}, we obtain
   	\begin{align*}
   		&-\frac{1}{*\Omega}\sum_{\alpha,k}(\Omega_{\alpha 2\cdots n}\bar{R}_{\alpha kk1}+\cdots+\Omega_{1\cdots n-1\alpha }\bar{R}_{\alpha kk n})\\
   		=&\sum_{i}  \frac{\lambda_i^2}{1+\lambda_i^2}\prod_{j\neq i}\frac{1}{1+\lambda_j^2}\sum_{k}R(a_i, a_k,a_j, a_k) \\
   		=& \sum_{i}  \frac{\lambda_i^2}{1+\lambda_i^2}\prod_{j\neq i}\frac{1}{1+\lambda_j^2} Ric_M(a_i, a_j)\geq 0.
   	\end{align*}
Since $Ric_{M}\geq 0$, we obtain
\begin{equation}\label{eq star Omega}
               (\F{d}{dt}-\Delta)\ln *\Omega\geq  \sum_{\alpha ,l,k} h_{\alpha, lk}^{2}+2\sum _{k,i<j}\lambda_{i}\lambda_{j}   h_{n+i,jk}h_{n+j,ik}+\sum_{i,k} \lambda_{i}^{2} h_{n+i,ik}^{2}.
\end{equation}
If $f_t$ preserves the strictly area decreasing property, $\lambda_i\lambda_j<1-\Sc$ for some positive constant $\Sc$ independent of $t$, as \cite[(6.2)]{Tsui-Wang2004}, the right hand side of \eqref{eq star Omega} is greater than $\Sc \sum_{\alpha,l,k}h_{\alpha,lk}^2$. Therefore it holds that
\begin{equation} \label{del:min:A}
     (\F{d}{dt}-\Delta)\ln *\Omega\geq \Sc\sum_{\alpha ,l,k} h_{\alpha, lk}^{2}
    \end{equation}
     \begin{remark}
        According to the maximum principle, the condition $*\Omega>0$ is not broken along the interior of the graphic MCF flows. \label{thegraph:remark}  As a result, under the assumption of Proposition \ref{prop:preserved} below, if the strictly length decreasing property of $f_t(x)$ is preserved along the MCF $\Sigma_t$, it remains a graph.
    \end{remark}
     To study the length decreasing property, consider the symmetric 2-tensor $S$ defined by
        \begin{equation*}
            S(X,Y)=\la  \pi_1 (X), \pi_1 (Y)\ra-\la  \pi_2 (X),  \pi_2 (Y)\ra
            \end{equation*}
           for any $X,Y\in T(E\times \mathbb{R}^{m})$, where $\pi_1:E\times\R^m\rightarrow E$ and $\pi_2:E\times \R^m\rightarrow \R^m$ are the corresponding projections. From \eqref{def:tangent:vector},
            \begin{equation}\label{S_ij}
               S_{ij}=S(e_i, e_j)=\frac{1-\lambda^2_{i}}{1+\lambda_i^2}\delta_{ij}.
                \end{equation}
      Thus $S$ is positive definite restricted on $T\Sigma_{t}$ if and only if $\lambda_i <1$, for $i=1,\cdots, n$  (that is $f_{t}$ is strictly length decreasing).  For future use, we point out that
      \begin{equation}\label{dust} S(e_{i}, e_{n+j})=S(e_{n+i}, e_j)=\frac{ -2\lambda _{i}}{1+\lambda_i^2}\delta_{ij} \quad  S(e_{n+i}, e_{n+j})=-S_{ij},
          \end{equation}  by \eqref{def:tangent:vector} and \eqref{def:normal:vector}.

The following theorem is an analog of the strictly length decreasing preserving result in \cite[Section  4]{Tsui-Wang2004} in the boundary case.
     \begin{prop} \label{prop:preserved}
     	Let $M^{n}$ be a complete Riemannian manifold with non-negative Ricci curvature and let $E\subset M$ be a bounded smooth domain. Suppose $\psi:\bar{E}\rightarrow \R^m$ is a smooth strictly length decreasing map, and the mean curvature flow $\Sigma_t$ in \eqref{MCF3} exists on $\bar{E} \times [0, T)$ as a smooth graph $(x,f_t(x))$ satisfying $\max\{\sup_{ \P E\times  [0,T)}|Df_t|,\sup_{ \bar{E} }|D\psi|\}<1-\Sc $, for a fixed constant $\Sc \in (0,1)$. Then $\Sigma_t$ stays a graph of a smooth strictly length decreasing map  on $\bar{E} \times [0, T)$ with $\sup_{E}|Df_t|< 1-\Sc$.
     \end{prop}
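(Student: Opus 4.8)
The plan is to preserve strict length decrease by running a maximum principle for the ambient symmetric $2$-tensor $S$ of \eqref{S_ij}, restricted to the moving graph, with the boundary hypothesis serving to forbid any loss of positivity on $\P E$. Write $g$ for the induced metric on $\Sigma_t$ and set
\[
\rho=\rho(\Sc):=\frac{1-(1-\Sc)^2}{1+(1-\Sc)^2}\in(0,1),\qquad \mathcal S:=S|_{T\Sigma_t}-\rho\, g .
\]
By \eqref{S_ij} and the orthonormal frame \eqref{def:tangent:vector}, $\mathcal S_{ij}=\big(\tfrac{1-\lambda_i^2}{1+\lambda_i^2}-\rho\big)\delta_{ij}$, so at a point $p$ one has $\mathcal S>0$ if and only if $\lambda_i(f_t)<1-\Sc$ for every $i$ at $p$ (the singular values being well defined since, by hypothesis, $\Sigma_t$ stays a graph). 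The hypotheses say $\mathcal S>0$ on $\bar E\times\{0\}$ (as $\psi$ is strictly length decreasing and $\sup_{\bar E}|D\psi|<1-\Sc$) and on $\P E\times[0,T)$ (as there $\lambda_i\le|Df_t|<1-\Sc$); the goal is $\mathcal S>0$ on $E\times[0,T)$.

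\textbf{Evolution of $\mathcal S$.} Because $S(X,Y)=\langle(\pi_1-\pi_2)X,Y\rangle$ and the almost-product structure $\pi_1-\pi_2$ is parallel for the product metric on $E\times\R^m$, the tensor $S$ is parallel. Restricting a parallel symmetric $2$-tensor to a mean curvature flow (the computation of \cite[\S4]{Tsui-Wang2004}, specialised to the flat target $\R^m$) gives, in an evolving orthonormal frame $\{e_i\}$ of $T\Sigma_t$,
\[
\Big(\frac{d}{dt}-\Delta\Big)\mathcal S_{ij}=(\text{terms linear in }\nabla\mathcal S)+\mathcal R_{ij},
\]
where $\mathcal R$ consists of a term quadratic in the second fundamental form contracted against $S$ and $\rho g$, together with a term built from the ambient curvature $\bar R$. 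Using \eqref{def:tangent:vector}--\eqref{def:normal:vector} and that $\bar R$ is the product curvature, the latter reduces to a Ricci-type expression in the curvature $R$ of $E$, exactly as in the computation that produced \eqref{eq star Omega}, and $Ric_M\ge0$ makes it sign-favorable; for the $h$-quadratic part one uses that on a null eigenvector $v$ of $\mathcal S$ one has $\lambda_i\le1-\Sc<1$, so the rearrangement of \cite[\S4]{Tsui-Wang2004}, valid precisely when all $\lambda_i<1$, yields $\mathcal R(v,v)\ge0$. Thus $\mathcal R$ satisfies Hamilton's null-eigenvector condition.

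\textbf{Maximum principle with boundary.} I would then argue by first loss of positivity. Put $t_0:=\sup\{t<T:\ \mathcal S>0\ \text{on}\ \bar E\times[0,t]\}$; by joint continuity and compactness of $\bar E$, $t_0>0$. Suppose $t_0<T$. Then $\mathcal S\ge0$ on $\bar E\times[0,t_0]$ — in particular $\lambda_i\le1-\Sc<1$ there, which legitimises the reaction estimate above on this slab — and there is $p_0\in\bar E$ at which the smallest eigenvalue of $\mathcal S(\cdot,t_0)$ vanishes. On the compact set $\P E\times[0,t_0]$ the hypothesis gives $\sup|Df_t|<1-\Sc$, hence $\mathcal S\ge\delta_0\,g$ there for some $\delta_0>0$; therefore $p_0\in E$. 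Now, following Hamilton's strong maximum principle: extending a null vector of $\mathcal S$ at $(p_0,t_0)$ by radial parallel transport and constancy in $t$ produces a nonnegative function $u=\mathcal S(\tilde v,\tilde v)$ that is a supersolution of a scalar parabolic inequality near $(p_0,t_0)$ and vanishes at $(p_0,t_0)$; the strong scalar maximum principle forces $u\equiv0$ on $E\times(0,t_0]$, and propagating this degeneracy in space by the connectedness of $\bar E$ contradicts $\mathcal S\ge\delta_0 g$ on $\P E\times[0,t_0]$. Hence $t_0=T$, i.e. $\mathcal S>0$ on $E\times[0,T)$, which is exactly $\sup_E|Df_t|<1-\Sc$; together with the graphicality assumed in the statement this is the claim.

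\textbf{Main obstacle.} Compared with the closed/entire setting of \cite[\S4]{Tsui-Wang2004}, the genuinely new difficulty is the boundary: one must use the a priori bound $|Df_t|<1-\Sc$ on $\P E$ to prevent a first degeneracy of $\mathcal S$ from occurring on $\P E$, and then carry an interior degeneracy of the moving tensor $\mathcal S$ out to $\P E$ via a strong-maximum-principle/unique-continuation argument on a domain with boundary. If quoting the tensorial strong maximum principle on domains is awkward, an equivalent route is to work on slabs $\bar E\times[0,T']$ with the barrier $\mathcal S+\eta e^{Kt}g$, choosing $K$ from the $C^2$-bound of the smooth flow on the compact slab so that, at a would-be first spatial zero eigenvalue, the null-eigenvector estimate $\mathcal R(v,v)\ge-C\mathcal S(v,v)$ combines with the $Ke^{Kt}g$ term to produce a strictly positive lower bound on $\tfrac{d}{dt}$ of the spatial minimum eigenvalue — a contradiction — and then to let $\eta\downarrow0$ and finally upgrade $\ge0$ to $>0$ in $E$ as above.
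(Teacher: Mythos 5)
Your proposal is correct and follows essentially the same route as the paper: the identical comparison tensor $S_{ij}-\rho\,g_{ij}$ with the same constant $\rho=\frac{1-(1-\Sc)^2}{1+(1-\Sc)^2}$, the Tsui--Wang evolution equation, non-negative Ricci curvature to control the ambient curvature term, and verification of Hamilton's null-eigenvector condition for the reaction term. The only difference is one of detail: you spell out the boundary-adapted maximum principle (first time of degeneracy, exclusion of $\P E$ via the hypothesis there, and the $\eta e^{Kt}g$ barrier), whereas the paper simply invokes Hamilton's tensor maximum principle after noting positivity on the parabolic boundary.
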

     \begin{proof}
     By Remark \ref{thegraph:remark},  
     it suffices to show that $ \sup_{E}|D f_{t}|<1-\Sc$ for any $t\in [0,T)$.


     By the assumption, define $P_{ij}=S_{ij}-\epsilon' g_{ij}$ along the mean curvature flow $\Sigma_t$. Here $\Sc'=\frac{1-(1-\epsilon)^{2} }{1+(1-\epsilon)^{2}}\in (0,1)$ is a constant such 
     	 that $P_{ij}>0$ (positive definite) on $\P E\times [0,T)\cup E\times \{ 0\} $ according to \eqref{S_ij}. It remains to show that $P_{ij}>0$ is preserved along the interior of $\Sigma_t$ over the interval $[0,T)$.

     A straightforward computation gives that (c.f. \cite[ Eq. (4.8)]{Tsui-Wang2004})
     	\begin{align*}
     		(\frac{d}{dt}-\Delta )P_{ij}
     		=& -h_{\alpha il}H_{\alpha}P_{lj}-h_{\alpha jl}H_{\alpha}P_{li}+\bar{R}_{kik\alpha}S_{\alpha j}+\bar{R}_{kjk\alpha}S_{\alpha i}
     		\\&+h_{\alpha kl}h_{\alpha ki} P_{lj} +h_{\alpha kl}h_{\alpha kj} P_{li}+2\epsilon' h_{\alpha ki}h_{\alpha kj}-2h_{\alpha ki}h_{\beta kj} S_{\alpha\beta},
     	\end{align*}
          on $E\times [0, T)$.
         Let $Q_{ij}$ denote the right hand-side of the above equation.
         By the Hamilton's maximum principle for tensors (see \cite[Theorem 9.1]{Hamilton1982} or \cite{Hamilton1986}), if $Q_{ij}w^{i}w^{j}\geq 0$ for any vector $w=(w^1,\cdots, w^{n})^T$ on $E\times [0, T)$  such that  $P_{ij}w^{j}= 0$ for all $1\leq i\leq n$, then $P_{ij}$ is positive definite on $E\times[0,T)$.

         Indeed, we compute
     	\begin{align} \label{Q positive}
     		Q_{ij}  w^{i}w^{j}
     		=&  2\bar{R}_{kik\alpha}S_{\alpha j}w^{i}w^{j}
     		+2\epsilon' h_{\alpha ki}h_{\alpha kj}w^{i}w^{j}-2h_{\alpha ki}h_{\beta kj} S_{\alpha\beta}w^{i}w^{j}\nonumber
     		\\\geq& 2\bar{R}_{kik\alpha}S_{\alpha j}w^{i}w^{j} -2h_{\alpha ki}h_{\beta kj} S_{\alpha\beta}w^{i}w^{j}.
     	\end{align}
     	For the second term in \eqref{Q positive}, using $S_{ij}\geq \epsilon g_{ij}\geq0$ and the relation in \eqref{dust}, we have
     	\begin{align*}
     		&-2h_{\alpha ki}h_{\beta kj} S_{\alpha\beta}w^{i}w^{j}
     		\\=&2\sum_{1\leq p,q\leq r} h_{n+p, ki}h_{n+q, kj} S_{pq}w^{i}w^{j}+2\sum_{r+1\leq p,q\leq m} h_{n+p, ki}h_{n+q, kj}  w^{i}w^{j}\geq 0.
     	\end{align*}
     For the first term in \eqref{Q positive}, we  prove that $\bar{R}_{kik\alpha}S_{\alpha j} \geq 0$.
     	Since
     	\begin{align*}
     		\sum_{k} \bar{R} (e_{ \alpha},e_{ k},e_{k},e_{i})
     		= &\sum_{k}R(\pi_{1}(e_{ \alpha}),\pi_{1}(e_{ k}),\pi_{1}(e_{k}),\pi_{1}(e_{i}))
     		\\= & \sum_{k}R( e_{ \alpha},e_{ k},e_{k},e_{i})=Ric_{M} ( e_{ \alpha}, e_{i}).
     	\end{align*}
Here we used the definition of sectional curvature  $K(X,Y)=R(X,Y,Y,X)$.
     	As a result, by \eqref{dust}, we have
     	\begin{align*}
     		\sum_{\alpha,k} \bar{R}_{kik\alpha}S_{\alpha j}
     		=  &- \sum_{p,k} \bar{R}_{n+p, kki }S_{n+p, j} =-\sum_{p }Ric_{M}( e_{n+p}, e_{i})S_{n+p, j}
     		\\=& Ric_{M}( e_{n+j}, e_{i}) \frac{ 2\lambda_{j}}{1+\lambda_{j}^{2}}  \geq0.
     	\end{align*}
   In summary, plugging the above computations into \eqref{Q positive} gives $Q_{ij}v^i v^j\geq 0$. This completes the proof.
     \end{proof}

\section{Boundary gradient estimates}\label{Sec bdry est}

	In this section, we derive the boundary gradient estimate along the mean curvature flow, which is essential in the proof of Theorem \ref{main:thm:A}.

	Denote the graphic mean curvature flow by  $\Sigma_{t }=(x,f_{t}(x))$, where $f_t(x):\bar{E}\times [0,T)\rightarrow \R^m$. Let $df_t$ denote the differential of $f_t$. Define $d(x):=dist(x,\partial E)$, for $x\in E$ as the distance from $x$ to the boundary $\partial E$.

	\begin{lem}\label{lap}
	Let $M^{n}$ be a complete Riemannian manifold and let $E\subset M$ be a bounded smooth domain. Suppose the mean curvature flow $\Sigma_{t}$ exists on $E\times[0,T)$. Then there exists a constant $\eta_0=\eta_0(n, \P E)>0$, and $c_{0}=c_{0}(n,\partial E )>0$,  such that
		\begin{align*}
			-\Delta_{\Sigma_{t}} d \geq -c_{0}, \  \   \mbox{on} \ E_{\eta_0} ,
		\end{align*}
	 where $E_{\eta_{0}} =\{x\in E:d(x,\partial E)<\eta_{0}\}$.  Additionally, if $E$ is strictly convex,  $c_0$ can be chosen as $0$.
	\end{lem}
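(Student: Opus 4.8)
The plan is to exploit the comparison between the Laplacian of the distance function on the ambient manifold $M$ and the Laplacian induced on the evolving submanifold $\Sigma_t$, and then to use the boundary regularity of $\partial E$ to control the sign of the ambient quantity in a fixed collar neighborhood. First I would recall that for $x$ in a tubular neighborhood $E_{\eta_0}$ of $\partial E$, the function $d(x)=\mathrm{dist}(x,\partial E)$ is smooth, with $|\bar\nabla d|=1$, and its Hessian is controlled by the principal curvatures of the level sets of $d$; choosing $\eta_0=\eta_0(n,\partial E)$ smaller than the focal radius of $\partial E$ guarantees this smoothness and a uniform bound $|\bar\nabla^2 d|\le c_1(n,\partial E)$ on $E_{\eta_0}$ (this is where $\partial E$ being embedded, smooth and compact enters). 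In particular, if $\partial E$ is strictly convex, then the level sets $\{d=s\}$ are convex for small $s$, so $\bar\nabla^2 d\le 0$ as a bilinear form on vectors tangent to the level sets, and we keep track of the sign for the improved statement.

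The key computational step is the submanifold Laplacian formula: for any smooth ambient function $\phi$ restricted to $\Sigma_t$,
\begin{equation*}
\Delta_{\Sigma_t}\phi = \sum_{i=1}^n \bar\nabla^2\phi(e_i,e_i) + \langle \bar\nabla\phi, \vec H\rangle,
\end{equation*}
where $\{e_i\}$ is an orthonormal frame for $T_p\Sigma_t$ as in \eqref{def:tangent:vector} and $\vec H$ is the mean curvature vector. Apply this to $\phi=d$ (extended to the ambient neighborhood $E_{\eta_0}\times\R^m$ as a function of the $E$-variable alone, so its ambient Hessian agrees with $R$-data on $E$ and annihilates the $\R^m$ directions). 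Along the mean curvature flow the velocity is exactly $\vec H$, but since $d$ does not depend on $t$ the term $\langle \bar\nabla d,\vec H\rangle$ is the only place $\vec H$ appears; crucially, because $d$ is a function of the base only and $|\bar\nabla d|\le 1$, this term is bounded by $|\vec H|$, which is \emph{not} a priori controlled. So instead I would not split off the mean-curvature term naively: rewrite $\Delta_{\Sigma_t} d = \sum_i \bar\nabla^2 d(e_i^{\top},e_i^{\top})$ plus the trace over the normal directions of $\bar\nabla^2 d$ coming from the second fundamental form of $\Sigma_t$ contracted against $\bar\nabla d$ — i.e. use the full ambient Hessian of $d$ evaluated on the tangent frame of $\Sigma_t$, together with $\sum_i \bar\nabla^2 d(e_i,e_i)$ taken in the ambient sense, so that the only curvature-of-$\Sigma_t$ contribution is $\langle \bar\nabla d, \vec H\rangle$ with $\vec H = \sum_i (\bar\nabla_{e_i} e_i)^\perp$. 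The honest route: since $d$ extends to a function on $M$ with $\bar\nabla^{M,2} d$ bounded, and $\Sigma_t\subset M\times\R^m$ with $d$ pulled back from $M$, we get $\Delta_{\Sigma_t} d = \operatorname{tr}_{\Sigma_t}(\bar\nabla^2 d) - \langle (\bar\nabla d)^\perp, \vec H\rangle + \langle \bar\nabla d, \vec H\rangle$, and the last two terms cancel because $\bar\nabla d$ is tangent to $M\hookrightarrow M\times\R^m$ while... wait — this cancellation is exactly the point, and it is legitimate: $\Delta_{\Sigma_t} d = \sum_i \bar\nabla^2 d(e_i,e_i) + \langle\bar\nabla d,\vec H\rangle$ and $\langle \bar\nabla d,\vec H\rangle = \langle (\bar\nabla d)^{T_{\Sigma_t}} + (\bar\nabla d)^{N_{\Sigma_t}}, \vec H\rangle = \langle(\bar\nabla d)^{N}, \vec H\rangle$; but the mean curvature vector satisfies $\vec H = \operatorname{tr}_{\Sigma_t} A$ where $A(X,Y) = (\bar\nabla_X Y)^N$, so $\langle (\bar\nabla d)^N,\vec H\rangle = \sum_i \bar\nabla^2 d(e_i,e_i) - \sum_i \bar\nabla^2_{e_i^{T}} d(e_i^{T})\big|_{\text{intrinsic}}$... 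The clean statement I will use is: $\Delta_{\Sigma_t} d = \sum_{i=1}^n \bar\nabla^2 d(e_i, e_i)$, the sum of the ambient Hessian over any orthonormal tangent frame of $\Sigma_t$, with no mean-curvature term at all, precisely because $d$ is harmonic-independent of the $\R^m$ fibers and the trace identity $\operatorname{tr}_{T\Sigma_t}\bar\nabla^2 d + \langle\bar\nabla d,\vec H\rangle$ double counts; I will verify the correct identity and then conclude.

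Granting the identity $-\Delta_{\Sigma_t} d = -\sum_{i=1}^n \bar\nabla^2 d(e_i,e_i)$, the bound follows immediately: each $e_i$ is a unit vector in $T(M\times\R^m)$, $\bar\nabla^2 d$ annihilates $\R^m$-directions and has $M$-part bounded by $c_1(n,\partial E)$ on $E_{\eta_0}$, so $-\Delta_{\Sigma_t} d \ge -n\,c_1 =: -c_0$. For the strictly convex case, the level sets of $d$ being convex means $\bar\nabla^2 d \le 0$ on $T\{d=s\}$, and $\bar\nabla^2 d(\bar\nabla d,\cdot) = 0$, so $\bar\nabla^2 d \le 0$ as a symmetric form on all of $TM$; hence each term $\bar\nabla^2 d(e_i,e_i)\le 0$ (the $\R^m$-components contributing zero), giving $-\Delta_{\Sigma_t} d \ge 0$, i.e. $c_0 = 0$. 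I expect the main obstacle to be pinning down the correct submanifold Laplacian identity for a function pulled back from the base factor $M$ — specifically, being careful that the mean-curvature term either genuinely vanishes or is absorbed, rather than appearing as an uncontrolled $|\vec H|$; once that identity is settled the estimate is a routine consequence of the focal-radius bounds on $\partial E$, and the constants $\eta_0, c_0$ manifestly depend only on $n$ and the geometry of $\partial E$.
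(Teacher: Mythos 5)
Your estimate itself — choose $\eta_0$ below the focal radius of $\partial E$ so that $d$ is $C^2$ on the collar with $|\mathrm{Hess}(d)|\le c_1(n,\partial E)$, trace the ambient Hessian of $d$ over a unit tangent frame of $\Sigma_t$, and get $-n c_1=:-c_0$ (with sign $0$ in the strictly convex case via $\mathrm{Hess}(d)(\bar\nabla d,\cdot)=0$) — is exactly the paper's computation \eqref{eq:computation:d}, since $\sum_i \bar\nabla^2 d(e_i,e_i)=\sum_i\frac{1}{1+\lambda_i^2}\mathrm{Hess}(d)(a_i,a_i)=g^{ij}\mathrm{Hess}(d)(a_i,a_j)$ in the singular-value frame and each coefficient $\frac{1}{1+\lambda_i^2}$ lies in $(0,1]$.

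The gap is in the identity you build everything on. You correctly recall that for the Laplace--Beltrami operator of a submanifold one has $\Delta_{\Sigma_t}u=\operatorname{tr}_{T\Sigma_t}\bar\nabla^2 u+\langle\bar\nabla u,\vec H\rangle$, you correctly identify the $\langle\bar\nabla d,\vec H\rangle$ term as the danger, and then you discard it with an argument ("the trace identity double counts", "$d$ is independent of the $\R^m$ fibers") that is not correct and that you yourself leave unverified. The term does \emph{not} vanish: $\bar\nabla d$ lies in $TM$, but the normal space of $\Gamma(f_t)$ is spanned by $e_{n+i}=\frac{1}{\sqrt{1+\lambda_i^2}}(-\lambda_i a_i+a_{n+i})$, so $\langle\bar\nabla d,e_{n+i}\rangle=-\frac{\lambda_i}{\sqrt{1+\lambda_i^2}}\langle\bar\nabla d,a_i\rangle\neq 0$ whenever $\lambda_i\neq 0$; a one-dimensional graph $(x,f(x))$ already shows $\Delta_\Sigma u=g^{11}u''-\frac{f'f''u'}{(1+f'^2)^2}\neq g^{11}u''$. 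The correct resolution, which your write-up never reaches, is that $\Delta_{\Sigma_t}$ in this lemma is not the intrinsic Laplace--Beltrami operator but the operator $g^{ij}\nabla^E_i\nabla^E_j$ (the trace of the $E$-Hessian against the induced metric) that drives the non-parametric flow \eqref{MCF3} and appears in the maximum-principle computation \eqref{dt-laplace} of Proposition \ref{keyLboundary:estimate}; for that operator the identity $\Delta_{\Sigma_t}d=g^{ij}\mathrm{Hess}(d)(a_i,a_j)$ holds by definition and no mean-curvature term ever arises (the two operators differ only by a first-order drift, which is harmless for the maximum principle but must not be conflated with zero). As written, your proof either asserts a false identity or leaves an uncontrolled $\langle\bar\nabla d,\vec H\rangle$ term, which would destroy the claimed $t$-independent constant $c_0$; once the operator is identified correctly the rest of your argument goes through verbatim.
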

    \begin{remark}
    	The main role of $\eta_0$ is to ensure that the function $d$ is a $C^2$ function on $E_{\eta_0}$. For instance, whenever $E$ is strictly convex, we shall choose $\eta_0$ sufficiently small such that the level set of $d(x)$ in $E_{\eta_0}$ is embedded and convex with respect to the outward normal vector.
        \end{remark}
	\begin{proof}
	It is easy to see that $d(x)$ is a $C^2$ function on $E_{\eta_{0}}$, for some  $\eta_{0}=\eta_{0}(n,\partial E)>0$.
Fix any $x\in E_{\eta_{0}}$, we use the induced metric of $\Sigma_t$ given in \eqref{coe:g}. Let $(\sigma_{ij})$ be the metric matrix of $E$. By Lemma \ref{lm:gij},$(g_{ij})\leq (\sigma_{ij})$. For any $x\in E_\eta$, choose an orthonormal frame $\{a_1,\cdots,a_n\}$ at $T_x E$. Without loss of generality, we assume $(\sigma_{ij})$ is the identity matrix at $x$. Hence $(g_{ij})\leq (\delta_{ij})$. Then at any $x\in E_{\delta_0}$,
		\begin{align}
			-\Delta_{\Sigma_{t}} d =&-g^{ij}\mathrm{Hess}(d) (a_{i}, a_{j})
			\geq
		  -  \sum_{i} |\mathrm{Hess}(d) (a_{i}, a_{i})|\label{eq:computation:d}\\
          \geq& -n\max_{i}\sup_{E_{\eta_0}}|\mathrm{Hess}(d) (a_{i}, a_{i})|:=-c_{0}(n,\partial E).\notag
		\end{align}
     \noindent   In the case that $ E$ is strictly convex, the matrix $\{-\mathrm {Hess}(d)(a_i,a_j)\}$ is positive definite on $\P E$. As a result, if we take $\eta_0$ sufficiently small, it holds that
        $-g^{ij}\mathrm{Hess}(d)(a_i,a_j)\geq 0$ on $E_{\eta_0}$. In the above inequality, $c_0$ could be chosen as $0$.
	\end{proof}
	\begin{remark}\label{delta0rmk}
			In particular, when  $E$ is a ball $B_{r}(0)$ in $\R^n$ centered
            at the origin with radius $ r$, a direct computation yields that  $-\mathrm {Hess}(d)(e_i,e_j)(x)=\F{1}{r-d(x)}\delta_{ij}> 0$ for any $x\in B_{r}(0)$. Then, according to \eqref{eq:computation:d},
			\begin{equation}
				-\Delta_{\Sigma_t}d \geq 0
			\end{equation}
			holds on $\{x\in B_{r}(0): dist(x,\P B_{r}(0))<1\}$.
			Therefore, we can choose $c_0$ as zero and $\eta_0$ as any number in $(0, r)$.
	\end{remark}

To formulate the desirable boundary gradient estimates, we begin by defining some notations to govern the subsequent discussion. Denote $w^{A} =\sup_{\bar{E}}\psi ^{A}-\inf_{\bar{E}}\psi^{A} $ and  $w(\psi) =\max_{1\leq A\leq m}w^{A}$. Let  $f=(f^{A})_{A=1}^{m}$ and $\psi=(\psi^{A})_{A=1}^{m}$. By the maximum principle, $\inf_{\bar{E}}\psi^{A} \leq  f^{A} \leq\sup_{\bar{E}}\psi^{A}$  on $\bar{E}\times[0,T)$ for $1\leq 	A \leq m$.

	\begin{prop}\label{keyLboundary:estimate}
		Let $M^{n}$ be a complete Riemannian manifold and let $E\subset M$ be a bounded smooth domain. Suppose $F=(x,f(x,t)): E\times[0,T)\rightarrow E`	\times \R^{m}$ is a smooth solution to \eqref{MCF3}. Then there exists a constant $ \delta_0=\delta_0(n,\partial E,\mu)>0$  such that for any $\delta\in (0,\delta_0]$,
		\begin{align*}
			|Df | \leq &    \frac{ \omega(\psi)}{\delta} +  |D\psi |+16n(1+\mu) \delta   |D^{2} \psi |, \quad \mbox{on}\quad  ~\partial E \times [0,T),
		\end{align*}
		where $\mu=\sup_{E \times [0,T)}|Df |^{2} $.
	\begin{remark}\label{remark:definitioneta}
	Let $c_0$ and $\eta_0$ be the numbers given in Lemma \ref{lap}. In \eqref{definition:eta_0},  the constant $\delta_0$ is given by
	\begin{align*}
	\delta_0=\begin{cases} \frac{1}{2}\min \{\frac{1}{8c_{0}(1+\mu) }, \eta_{0}\}, \quad \mbox{if}\  c_0>0;\\
		\eta_0, \quad\mbox{if}\  c_0=0 \quad( \text{when $E$ is strictly convex}).
	\end{cases}
	\end{align*}
    This Proposition generalizes the computation in \cite[Proposition 2.2]{Wang2004}.
	\end{remark}
	\end{prop}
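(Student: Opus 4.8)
The plan is to construct an explicit barrier function near the boundary and invoke the maximum principle for the parabolic system \eqref{MCF3}. Fix a boundary point $x_0\in\partial E$ and work in the collar neighborhood $E_{\delta}$ where $d(x)=\mathrm{dist}(x,\partial E)$ is smooth. The candidate barrier for the $A$-th component is of the form
\begin{equation*}
h^A(x) = \psi^A(x) + \Big(\frac{w(\psi)}{\delta} + |D\psi| + 16n(1+\mu)\delta|D^2\psi|\Big)\,\big(\phi(d(x))\big),
\end{equation*}
(and the corresponding lower barrier with a minus sign), where $\phi$ is a concave increasing function of $d$ on $[0,\delta]$ with $\phi(0)=0$, e.g. a rescaling of $d - \tfrac{d^2}{2\delta}$ or an exponential-type profile, chosen so that $\phi$ dominates the oscillation of $\psi^A$ on $E_\delta$ at the level set $\{d=\delta\}$. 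First I would check the boundary comparison: on $\partial E$, $h^A = \psi^A = f^A$; on the inner face $\{d = \delta\}\cap E$, the term $\tfrac{w(\psi)}{\delta}\phi(\delta)$ must exceed $\sup_{\bar E}\psi^A - \inf_{\bar E}\psi^A$, which is arranged by the normalization of $\phi$; at $t=0$ one uses $f = \psi$ (or the given smooth initial data) together with the same inequality.

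The heart of the argument is the differential inequality. One computes $\tfrac{\partial}{\partial t}h^A - g^{ij}\partial_{ij}h^A$ for the moving metric $g^{ij}=g^{ij}(df_t)$: since $h^A$ is time-independent, this is $-g^{ij}\partial_{ij}h^A$. Expanding, $-g^{ij}\partial_{ij}\psi^A$ is controlled by $\sup|D^2\psi|$ after using Lemma \ref{lm:gij} to bound $g^{ij}\le\sigma^{ij}$ (trace $\le n$), contributing a term of size $\lesssim n|D^2\psi|$. The barrier term contributes $-\big(\tfrac{w(\psi)}{\delta}+\cdots\big)\,g^{ij}\partial_{ij}\phi(d)$. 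Here $\partial_{ij}\phi(d) = \phi''(d)\,d_i d_j + \phi'(d)\,\mathrm{Hess}(d)_{ij}$. The crucial sign: $\phi'' < 0$ and $g^{ij}d_id_j > 0$ give a favorable (negative times negative, i.e. large positive after the outer minus sign... wait — after the outer minus: $-\phi'' g^{ij}d_id_j \geq 0$, strongly positive since $\phi''$ is very negative, of order $1/\delta$ times the coefficient), while $-\phi'\,g^{ij}\mathrm{Hess}(d)_{ij} \geq -\phi'\,c_0$ by Lemma \ref{lap} (and $\geq 0$ in the strictly convex case). The definition of $\delta_0$ in Remark \ref{remark:definitioneta} — namely $\delta_0 \le \tfrac{1}{16c_0(1+\mu)}$ — is exactly what is needed so that the good $\phi''$ term beats the bad $\phi' c_0$ term plus the $|D^2\psi|$ term; the constant $16n(1+\mu)$ in the statement is reverse-engineered from this balance. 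This shows $\big(\tfrac{\partial}{\partial t} - g^{ij}\partial_{ij}\big)(h^A - f^A)\le 0$ on $E_\delta\times[0,T)$ for $h^A-f^A$ the upper barrier difference, and $\geq 0$ for the lower one.

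With the sub/supersolution inequalities and the boundary/initial comparisons in hand, the (scalar, since the system \eqref{MCF3} is diagonal in the $f^A$ once $g^{ij}$ is regarded as a given coefficient matrix) maximum principle on the cylinder $E_\delta\times[0,T)$ yields $|f^A(x,t)-\psi^A(x)| \le \big(\tfrac{w(\psi)}{\delta}+|D\psi|+16n(1+\mu)\delta|D^2\psi|\big)\phi(d(x))$ throughout $E_\delta\times[0,T)$. Evaluating the normal derivative at $\partial E$: since $f^A=\psi^A$ on $\partial E$ and $\phi(0)=0$, dividing by $d(x)$ and letting $x\to\partial E$ gives $|\partial_n(f^A-\psi^A)| \le \big(\tfrac{w(\psi)}{\delta}+|D\psi|+16n(1+\mu)\delta|D^2\psi|\big)\phi'(0)$. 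Normalizing $\phi$ so that $\phi'(0)=1$ (consistent with $\phi$ concave, $\phi(\delta)$ of order $1$, giving $\phi'(0)\approx 1$ — this is where a precise choice like $\phi(s)=s-\tfrac{s^2}{2\delta}$ gives $\phi'(0)=1$ exactly) and combining with the tangential derivative bound $|D_{\tan}f^A|=|D_{\tan}\psi^A|\le|D\psi|$ on $\partial E$, one gets $|Df^A|\le \tfrac{w(\psi)}{\delta}+|D\psi|+16n(1+\mu)\delta|D^2\psi|$ on $\partial E\times[0,T)$, and taking the max over $A$ finishes the proof.

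The main obstacle I anticipate is getting the constants to line up exactly: one must choose the barrier profile $\phi$ so that simultaneously $\phi(0)=0$, $\phi'(0)=1$, $\phi$ is concave with $|\phi''|$ comparable to $1/\delta$, and $\phi(\delta)$ is large enough ($\geq 1$, say) to dominate the oscillation after multiplying by $w(\psi)/\delta$ — and then verify that the differential inequality closes with precisely the coefficient $16n(1+\mu)$ rather than some larger constant, which forces the particular cutoff $\delta_0 = \tfrac12\min\{\tfrac{1}{8c_0(1+\mu)},\eta_0\}$. The factor-of-two discrepancies and the role of $n$ (entering through $\mathrm{tr}_{g}\,\mathrm{Hess}(d)\le nc_0$ and through the $|D^2\psi|$ term via $g^{ij}\partial_{ij}\psi^A$) require care but are routine once the structure above is fixed. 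A secondary point is checking that $\phi$ composed with $d$ is genuinely $C^2$ on $E_\delta$, which is guaranteed by $\delta\le\eta_0$ from Lemma \ref{lap}.
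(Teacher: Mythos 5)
Your overall strategy (upper/lower barriers built from the distance function, the parabolic maximum principle on $E_\delta\times[0,T)$, then the normal-derivative evaluation combined with the tangential identity $|D^{\top}f|=|D^{\top}\psi|$) is the same as the paper's. However, there is a genuine gap in the barrier construction itself. Your single concave profile $\phi$ is asked to do three incompatible things at once: (a) $\phi'(0)=1$, which is forced if the normal-derivative bound is to come out as exactly the coefficient $C=\tfrac{w(\psi)}{\delta}+|D\psi|+16n(1+\mu)\delta|D^2\psi|$; (b) strict concavity with $|\phi''|\sim 1/\delta$, which is what makes the parabolic inequality close; and (c) $C\,\phi(\delta)\geq w(\psi)$ on the inner face $\{d=\delta\}$, which (since $C$ is only guaranteed to be $\geq w(\psi)/\delta$) requires $\phi(\delta)\geq\delta$. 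But a concave function with $\phi(0)=0$ and $\phi'(0)=1$ satisfies $\phi(\delta)\leq\delta$, with equality only if $\phi$ is linear on $[0,\delta]$ — in which case $\phi''\equiv 0$ and (b) fails. Your concrete choice $\phi(s)=s-\tfrac{s^2}{2\delta}$ gives $\phi(\delta)=\delta/2$, so the comparison on $\partial^\delta E$ fails; rescaling to fix $\phi(\delta)=\delta$ doubles $\phi'(0)$ and ruins the final constant. This is not a "routine factor-of-two": it is the structural reason the paper uses a \emph{two-term} barrier $S=\nu\log(1+d/\delta)+\psi^A-f^A+\tfrac{\omega^A}{\delta}d$.

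In the paper's decomposition the two roles are decoupled: the linear piece $\tfrac{\omega^A}{\delta}d$ alone handles the inner face (it equals $\omega^A$ at $d=\delta$, and the log piece is nonnegative there, so it need not contribute), while the strictly concave log piece alone produces the good term $\tfrac{\nu k^2}{(1+kd)^2}g^{ij}d_id_j$ in the parabolic inequality. The coefficient $\nu$ is then \emph{solved for} so that the inequality closes, and the resulting extra contribution $\nu/\delta$ to the normal derivative is shown to be $\leq \tfrac{\omega^A}{\delta}+8n(1+\mu)\delta|D^2\psi^A|$ when $\delta\leq\alpha_0=\min\{\tfrac{1}{8c_0(1+\mu)},\eta_0\}$; the final constants ($w(\psi)/\delta$ rather than $2w(\psi)/\delta$, and $16$ rather than $8$) come from the last step of replacing $\delta_0$ by $\tfrac12\alpha_0$, i.e.\ applying the $\alpha_0$-estimate at $2\delta$. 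If you replace your single profile by this two-term ansatz (or, equivalently, add a separate linear term carrying the coefficient $w(\psi)/\delta$ and let your concave correction carry a small, to-be-determined coefficient), the rest of your argument goes through as written.
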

	\begin{proof}
		Choose two coordinate systems  $\{x^{i}\}$ on $E$  and   $\{y^{A}\}$ on $\mathbb{R}^{ m}$, respectively. Then
            $F(x,t)=(x^1,\cdots, x^n, f^1(x,t),\cdots, f^m(x,t))$ and  $ \psi(x)=(\psi^1(x),\cdots,\psi^m(x)) $. For each  $A=1,\ldots,m$, define
		\begin{align*}
			S (x,t)=\nu \log (1+kd(x))+\psi ^{A}-f ^{A}+\frac{\omega^{A}}{\delta}d(x), \quad\mbox{on}~ E_{\delta}\times[0,T),
	  \end{align*}
		where $k,\nu$  and $\delta$ are positive constants to be determined later.

		Firstly, we claim that $S\geq 0$ on $\partial E_{\delta}\times[0,T)$. In fact, note that $\partial E_{\delta}=\partial E \cup\partial^{\delta}E$, where $\partial^{\delta} E:=\{x\in E:d(x,\partial E)=\delta\}$.  Obviously, $S=0$ on $\partial E\times[0,T)$. By the definition of $\omega^A$,  $\omega^{A}=\sup_{\bar{E}}\psi^A-\inf_{\bar{E}}\psi^A$, it holds that
		\begin{align*}
			S  \geq   
			\psi^{A}-f^{A}+\sup_{\overline{E}}\psi^{A}-\inf_{\overline{E}}\psi^{A}\geq 	\sup_{\overline{E}}\psi^{A}- f^{A} \geq 0,  \quad\mbox{on}~ \partial^{\delta}E\times[0,T).
		\end{align*}
		On the other hand, since $f\equiv \psi$ on $\P E\times [0,T)$, we see that $S\geq 0$ on $ \P E\times [0,T)$. These two facts together yield the claim.

		Next, a direct calculation gives
		\begin{align}\label{dt-laplace}
			&(\dfrac{d}{dt}-\Delta _{\Sigma_{t}})S \\
			=&-\bigg(\frac{ \nu k}{1+kd (x)}+ \frac{\omega^{A}}{\delta} \bigg)\Delta _{\Sigma_{t}}d(x) +\frac{\nu k^{2}}{(1+kd(x))^2}g^{ij}\frac{\partial d }{\partial x^{i} }\frac{\partial d }{\partial x^{j} }-\Delta_{\Sigma_{t}} \psi ^{A} \nonumber.
		\end{align}

		Let $\mu=\sup_{E \times [0,T)}|Df |^{2} $. By Lemma \ref{lm:gij} $\frac{1}{1+\mu} (\sigma^{ij})\leq (g^{ij})\leq (\sigma^{ij})$. Applying Lemma \ref{lap}, we derive
		\begin{align*}
			(\dfrac{d}{dt}-\Delta _{\Sigma_{t}})S \geq-c_{0}\bigg(\dfrac{ \nu k}{1+kd (x)}+ \frac{\omega^{A}}{\delta} \bigg)+\frac{\nu k^{2}}{(1+kd (x))^2} \frac{1}{1+\mu} -|\Delta_{\Sigma_{t}} \psi ^{A}|,
		\end{align*}
		where $c_{0}=c_{0}(n,\partial E)>0$ is the constant given in Lemma \ref{lap}.

		Further,  using $|\Delta_{\Sigma_{t}} \psi ^{A}|  \leq n|D^{2}\psi^{A}|$
		and $d(x)< \delta$ on $E_{\delta}$, we obtain
		\begin{align*}
			(\frac{d}{dt}-\Delta _{\Sigma_{t}})S
			\geq&-c_{0}( \nu k +\frac{ \omega^{A}}{\delta})+ \frac{\nu k^{2}}{(1+k\delta)^2}\frac{1}{1+\mu} -n|D^{2} \psi ^{A}|.
		\end{align*}
		Choose $k= \delta^{-1}$. We derive
		\begin{align*}
			(\frac{d}{dt}-\Delta _{\Sigma_{t}})S
			\geq&-c_{0}\frac{( \nu + \omega^{A})}{\delta}+  \dfrac{\nu}{4(1+\mu)\delta^{2}} -n|D^{2} \psi ^{A}|\\
			=&\frac{\nu  }{\delta}\big (   \frac{1}{4(1+\mu) \delta} -c_{0}  \big)-c_{0}\frac{  \omega^{A} }{\delta}-n|D^{2} \psi ^{A}|.
		\end{align*}

	It suffices to take $\nu$ as
		\begin{align*}
			\nu =& \frac{4(1+\mu) \delta ^{2} }{1-4c_{0}(1+\mu) \delta}\big(c_{0}\frac{  \omega^{A} }{\delta}+n|D^{2} \psi ^{A}|\big)>0.
		\end{align*}
		Then
		\begin{align*}
			(\frac{d}{dt}-\Delta _{\Sigma_{t}})S\geq 0 \quad \text{ on} \quad E_\delta\times [0,T)
		\end{align*}

		To determine $\delta$, our consideration is divided into two cases.

		\textit{Case 1}.
		Suppose $c_{0}=0$. Now
		\begin{align} \label{c0=0}
			\dfrac{\nu }{\delta} =&  4(1+\mu) \delta n|D^{2} \psi ^{A}| .
		\end{align}
	Let $\delta_0=\eta_0$, where $\eta_0$ is   given as in Lemma \ref{lap}. We can choose $\delta$ to be any number in $(0,\delta_0]$.

	\textit{Case 2}. Suppose $c_{0}>0$.	Take $\alpha_0=\min \{\frac{1}{8c_{0}(1+\mu) }, \eta_{0}\}$. For any $\delta\in (0,\alpha_0]$, then  $4c_{0}(1+\mu)\delta\leq 1/2$ and so
		$1-4c_{0}(1+\mu) \delta \geq1/2$.
		Thus
		\begin{align}  \label{nu-delta-mu}
			\frac{\nu }{\delta} =& \frac{4(1+\mu) \delta  }{1-4c_{0}(1+\mu) \delta}\big(c_{0}\frac{  \omega^{A} }{\delta}+n|D^{2} \psi ^{A}|\big)\\\nonumber
			\leq&  8(1+\mu) \delta   \big(c_{0}\frac{  \omega^{A} }{\delta}+n|D^{2} \psi ^{A}|\big)\\\nonumber
			=&  8(1+\mu) c_{0}   \omega^{A} +8(1+\mu) \delta   n|D^{2} \psi ^{A}|\\\nonumber
			\leq &    \frac{ \omega^{A}}{\delta} +8n(1+\mu) \delta   |D^{2} \psi ^{A}| ,
		\end{align}

		In both cases, we arrive at $(\frac{d}{dt}-\Delta _{\Sigma_{t}})S\geq0$.
		Applying the maximum principle, we have $S \geq 0$ in $E_{\delta}\times[0,T)$. Namely $f^{A}- \psi ^{A}\leq \nu \log (1+kd(x))+ \frac{\omega^{A}}{\delta}d(x)$  on $E_{\delta}\times[0,T)$.

		Similarly, applying the above process to the function
		$\tilde{S} =\nu\log(1+kd (x ))+f^{A}-\psi^{A}+ \frac{\omega^{A}}{\delta}d(x)$, we obtain $ \psi ^{A}-f^{A}\leq \nu \log (1+kd(x))+ \frac{\omega^{A}}{\delta}d(x)$  on $E_{\delta}\times[0,T)$.

		Therefore, we obtain  
		\begin{align*}
			\left| \dfrac{\partial(f^{A}-\psi^{A}) }{\partial \mathbf{n}} \right|
			\leq&\lim_{d (x)\rightarrow0}	\dfrac {|f^{A}(x)-\psi^{A}(x)|}{d (x)}\\
			\leq &\lim_{d (x)\rightarrow0}\dfrac{\nu\log(1+kd (x))}{d (x)}+ \frac{\omega^{A}}{\delta}\\
			\leq&\nu k+\frac{ \omega^{A} }{\delta} = \frac{\nu +\omega ^{A}}{\delta}
		\end{align*}
		where $ \mathbf{n}$ is the outward unit  normal vector of $\partial E$.
		Assuming $\frac{\partial f^{A}}{\partial \mathbf{n}}=0$ for all $A\geq2 $ up to an orthogonal transformation of coordinates on $E$ at a fixed point, we derive
		\begin{align*}
			\left|\frac{\partial f}{\partial \mathbf{n}} \right|  \leq \frac{\nu +w(\psi) }{\delta}+\left| \dfrac{\partial \psi}{\partial \mathbf{n}}\right| .
		\end{align*}

		On the other hand, we have 
		\begin{align*}
			|D^{\top}f |= |D^{\top}\psi | \quad \mbox{on} ~ \partial E,
		\end{align*}
		where  $|D^{\top}f |(x)=\sup_{v\in T_{x}(\partial E),|v|=1}|Df(x)(v)|$.

		Therefore,
		\begin{align} \label{nu+omega}
			|Df |\leq &\sqrt{\big( \frac{\nu +w(\psi) }{\delta}+ | \frac{\partial \psi}{\partial \mathbf{n}}|\big )^{2}+|D^{\top}\psi |^{2}}\\
			\leq &\frac{\nu +w(\psi) }{\delta}+ |D\psi | \quad \mbox{on}~ \partial E.\nonumber
		\end{align}
		Here, in the last inequality, we use
		\begin{align*}
			&\big( \frac{\nu +w(\psi) }{\delta}\big)^{2}+| \frac{\partial \psi}{\partial \mathbf{n}} |^{2}+2\big( \frac{\nu +w(\psi) }{\delta}\big) | \frac{\partial \psi}{\partial \mathbf{n}} |+|D^{\top}\psi |^{2}\\
			\leq &(\frac{\nu +w(\psi) }{\delta})^{2}+  |D\psi | ^{2}+2 |D\psi | \frac{\nu +w(\psi) }{\delta}=(\frac{\nu +w(\psi) }{\delta}+ |D\psi |)^{2}
		\end{align*}

		Finally, we arrive the following conclusion.

		\textit{Case 1}.
		Combining \eqref{c0=0} and \eqref{nu+omega} together, we obtain
		\begin{align*}
			|Df | \leq &    \frac{ w(\psi) }{\delta} +  |D\psi |+4n(1+\mu) \delta   |D^{2} \psi |,\quad \mbox{on} ~\partial E \times [0,T).
		\end{align*}
      for any $\delta \in (0, \delta_0]$. \\
		\indent \textit{Case 2}.
		Combining \eqref{nu-delta-mu} and \eqref{nu+omega} together, we have
		\begin{align*}
			|Df | \leq &    \frac{ 2 w(\psi) }{\delta} +  |D\psi |+8n(1+\mu) \delta   |D^{2} \psi |,\quad \mbox{on} ~\partial E \times [0,T).
		\end{align*}
        for any $\delta\in (0,\alpha_0]$. Now let $\delta_0=\frac{1}{2}\alpha_0=\frac{1}{2}\min \{\frac{1}{8c_{0}(1+\mu) }, \eta_{0}\}$. The above inequality becomes that
		\begin{align*}
			|Df | \leq &    \frac{ w(\psi)}{\delta} +  |D\psi |+16n(1+\mu) \delta   |D^{2} \psi |.
		\end{align*}
        for any $\delta\in (0,\delta_0]$.

         Combining these cases together, there exists  $\delta_0=\delta_0(n,\mu, \P E)>0$ given by
        \begin{equation}\label{definition:eta_0}
                \delta_0=\begin{cases} \frac{1}{2}\min \{\frac{1}{8c_{0}(1+\mu) }, \eta_{0}\}, \quad c_0>0;\\
                    \eta_0, \quad c_0=0,
                \end{cases}
            \end{equation}
            such that for any $\delta\in (0,\delta_0]$,
   	\begin{align*}
   		|Df | \leq &    \frac{ w(\psi)}{\delta} +  |D\psi |+16n(1+\mu) \delta |D^{2} \psi |.
   	\end{align*}
   on $\P E\times [0,T]$. The proof is complete.
	\end{proof}

	\section{Bernstein-type results over half-spaces}\label{Sec Bernstein}
 In this section, we derive some Bernstein-type results for minimal graphs and self-shrinkers
 over half-spaces.  These results  are essential for establishing the long-time existence of the MCF flow with boundary in Lemma \ref{second:step:MCF}, albeit they may not be optimal.
 \subsection{Smooth Bernstein type theorem} Let $\R^n_{+}$ be the upper half-space $\{(x_1,\cdots, x_n)\in\R^n:x_n>0\}$. Throughout this subsection, $\Omega$ denotes $dx^1\wedge \cdots \wedge dx^n$ which is the volume form of $\R^n$.
	\begin{theorem}\label{smooth:bernstein:theorem}
		Let $f:\bar{E}\subset \R^n \rightarrow \R^m$ be a smooth map on $E$ and Liptschtiz on $\bar{E}$. Suppose $f$ is strictly length decreasing i.e. $\lambda_i <1-\Sc$ for  $1\leq i\leq n$ and for some $\Sc\in (0,1)$. Then the following results hold.

		(1) If $\Gamma(f)$, the graph of $f$, is minimal, then
		\begin{equation}\label{star:Omega}
			0\geq \Delta \ln *\Omega + \Sc \sum_{\alpha,l,k}h^2_{\alpha,lk} \quad \textit{on}\  \Gamma(f).
		\end{equation}

				(2) In addition, if $E=\R^n_{+}$ and $f|_{\P \R^n_{+}}=u_{\P \R^n_{+}}$, then $f$ is a linear map. Here $u_{\P \R^n_+}$ is a linear  map from $\P \R^n_+$ to itself. 
	\end{theorem}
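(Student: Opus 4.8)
This is the static, flat specialisation of the evolution computation already carried out in Section~\ref{Sec pre}. A minimal graph $\Gamma(f)$ is a time-independent solution of the mean curvature flow \eqref{MCF3} with $\vec{H}\equiv 0$, so the term $\tfrac{d}{dt}\ln *\Omega$ in \eqref{evolution Omega} vanishes; and since $M=\R^n$ is flat, the curvature term in \eqref{evolution Omega}, which entered \eqref{eq star Omega} only through the bound ``$\ge 0$'', now vanishes identically. Hence \eqref{eq star Omega} becomes the pointwise identity
\[
-\Delta\ln *\Omega=\sum_{\alpha,l,k}h_{\alpha,lk}^{2}+2\sum_{k,\,i<j}\lambda_i\lambda_j\,h_{n+i,jk}h_{n+j,ik}+\sum_{i,k}\lambda_i^{2}h_{n+i,ik}^{2}\qquad\text{on }\Gamma(f).
\]
Strict length decreasing ($\lambda_i<1-\Sc$) forces $\lambda_i\lambda_j<(1-\Sc)^2\le 1-\Sc$ because $\Sc\in(0,1)$, so $f$ is strictly area decreasing; the algebraic estimate used to pass from \eqref{eq star Omega} to \eqref{del:min:A} (Tsui--Wang \cite[(6.2)]{Tsui-Wang2004}) then bounds the right-hand side from below by $\Sc\sum_{\alpha,l,k}h_{\alpha,lk}^{2}$. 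Rearranging yields \eqref{star:Omega}.

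\textbf{Plan for part (2).} By \eqref{star:Omega}, $\ln *\Omega$ is superharmonic on $\Gamma(f)$; moreover $2^{-n/2}\le *\Omega\le 1$ because $f$ is length decreasing, so $\ln *\Omega$ is bounded, and by Lemma~\ref{lm:gij} the induced metric on $\Gamma(f)$ is uniformly equivalent to the Euclidean metric of $\R^n_+$, so $\Gamma(f)$ is bi-Lipschitz to the half-space. The key geometric observation is that the doubling of $\Gamma(f)$ across its boundary is legitimate: the boundary $\partial\Gamma(f)$ is the $(n-1)$-dimensional linear subspace $P\subset\R^{n+m}$ which is the graph of $u_{\partial\R^n_+}$, and since $T_p(\partial\Gamma(f))=T_pP$ for every $p\in\partial\Gamma(f)$ and $P$ is linear, the conormal of $\Gamma(f)$ along its boundary is orthogonal to $P$ --- that is, $\Gamma(f)$ meets $P$ perpendicularly. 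Since the Euclidean reflection $R$ across $P$ is an isometry of $\R^{n+m}$, it sends minimal submanifolds to minimal submanifolds, and by the reflection principle $\widehat\Gamma:=\Gamma(f)\cup R(\Gamma(f))$ is a smooth complete minimal $n$-submanifold of $\R^{n+m}$ with Euclidean volume growth.

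\textbf{The decisive step, and the main obstacle.} It remains to show that $\widehat\Gamma$ is an affine $n$-plane, which forces $f$ to be linear. The route I would take is to use that $\sup_{\bar E}|D\psi|<1$ bounds the tilt of $P$, so that $R$ distorts the singular values of $df$ in a controlled fashion; tracking this, $R(\Gamma(f))$ is again a graph over a region of $\R^n$ with singular values bounded away from the degenerate range, so that (after an elementary reparametrisation) $\widehat\Gamma$ is an entire length- or area-decreasing minimal graph over $\R^n$, and the higher-codimensional Bernstein theorem for such graphs --- \cite[Theorem 1.1]{Wang2003} or \cite{Xin21} --- gives that it is a plane. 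Making this precise, i.e.\ quantifying the action of $R$ on the singular-value data using only $\sup|D\psi|<1$ and justifying the reflection principle in higher codimension, is the hard part. A more intrinsic alternative, closer to the spirit of the rest of the paper, is to extend $\ln *\Omega$ across $P$ by the reflection symmetry and combine \eqref{star:Omega} with a Caccioppoli/Simons-type integral estimate together with the Euclidean volume bound to conclude $\ln *\Omega$ is constant; then $\sum_{\alpha,l,k}h_{\alpha,lk}^2\equiv 0$, $\Gamma(f)$ is totally geodesic, and $f$ is linear. Everything else --- part~(1), the boundedness and superharmonicity of $\ln *\Omega$, and the perpendicularity of $\Gamma(f)$ along $P$ --- is routine given Lemma~\ref{lm:gij} and the computations in Section~\ref{Sec pre}.
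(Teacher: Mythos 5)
Your part (1) is correct and is exactly the paper's argument: a minimal graph is a stationary solution of \eqref{MCF3}, the ambient curvature term vanishes in $\R^{n+m}$, strict length decreasing gives strict area decreasing, and the Tsui--Wang estimate \cite[(6.2)]{Tsui-Wang2004} turns \eqref{eq star Omega} into \eqref{del:min:A}, i.e.\ \eqref{star:Omega}.

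For part (2) your skeleton (reflect, double, invoke a Bernstein theorem for entire length-decreasing graphs) is the same as the paper's, but the step you yourself flag as ``the hard part'' is precisely where the proof lives, and your proposed substitute does not close it. First, the relevant reflection is not across a hyperplane of $\R^{n+m}$ but across the $(n-1)$-dimensional subspace $B$ containing $\P\Gamma(f)$: the map $\tau$ fixing $B$ and negating the $(m+1)$-dimensional complement $B^{\bot}$. Your ``perpendicular meeting'' observation is vacuous in this codimension --- the conormal is by definition a vector of $T_p\Gamma(f)$ orthogonal to $T_p\P\Gamma(f)=P$, so it lies in $P^{\bot}$ for \emph{any} submanifold with boundary in $P$, and this carries no information about stationarity of $\Gamma(f)\cup\tau(\Gamma(f))$ across $B$. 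The paper closes this gap by citing Allard's reflection principle for stationary varifolds \cite[Section 3.2]{Allard1975}: $v(\Gamma(f))+\tau(v(\Gamma(f)))$ is a stationary integer-multiplicity varifold in all of $\R^{n+m}$. Second, the paper never needs the doubled object to be a priori smooth across $B$, which is the other difficulty you anticipate: the double is the graph of a Lipschitz map $\tilde f:\R^n\to\R^m$ with the \emph{same} singular values as $f$ (after normalizing the boundary data to zero, $\tilde f(x',x_n)=-f(x',-x_n)$ on the lower half-space, and $d\tilde f=-df\circ\mathrm{diag}(1,\dots,1,-1)$), hence strictly length decreasing; smoothness then follows from the interior regularity theorem for Lipschitz length-decreasing weak solutions (Theorem \ref{smooth:c-minimal}, or \cite[Theorem 4.1]{Wang2004}), and linearity from \cite[Theorem 1.1]{Wang2003}. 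So there is no need to ``quantify the action of $R$ on the singular-value data'' --- that part is elementary once one reflects across $B$ rather than across the graph of a general linear boundary map. Your alternative Caccioppoli-type route would equally require stationarity of the doubled varifold across $B$ before any global integral estimate can be run, so it does not avoid the missing ingredient. As written, the proposal has a genuine gap at its central step.
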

	\begin{proof}
		The conclusion (1) follows from \eqref{del:min:A} and the fact that a minimal submanifold is stationary for the MCF.

	From now on, we assume that $E$ is the half space $\R^n_{+}$. Denote $B=\{(x_1,\cdots,x_{n-1},0, \cdots,0)\}$ which is the subspace of $ \R^{n+m}$. Let $\theta,\theta^{\bot}$ be the orthogonal projections from $\R^{n+m}$ onto $B$ and $B^{\bot}$ (which is the normal space of $B$), respectively. For $z=(x_1,\cdots,x_n,y^1,\cdots,y^m)\in \R^{n+m}$, define $$
		\tau(z):=\theta(z)-\theta^{\bot}(z)=(x_1,\cdots,x_{n-1}, -x_n, -y^1,\cdots, -y^m).$$
		Let $v(\Gamma(f))$ note the integral varifold generated from $\Gamma(f)$. Then it is stationary on $\R^{n+m}\setminus B $. According to the reflection principle in \cite[Section 3.2]{Allard1975}, $v(\Gamma(f))+\tau (v(\Gamma(f)))$ is also a stationary integral varifold which is the graph of some {\Lp} function $\tilde{f}:\R^n\rightarrow \R^m$. Moreover, $\lambda_i <1-\Sc$ for any $i=1,\cdots,n$ whenever $d\tilde{f} $ is well-defined. By \cite[Theorem 4.1]{Wang2004} or Theorem \ref{smooth:c-minimal} below, $\tilde{f}$ is smooth. According to \cite[Theorem 1.1]{Wang2003}, $\tilde{f}$ is a linear map on $\R^n$.
	\end{proof}
  Recall that a smooth submanifold in Euclidean space is called a \textit{self-shrinker} if its mean curvature vector $\vec{H}$ satisfies
\begin{equation}
    \vec{H}+\F{\vec{F}^{\bot}}{2}=0,
\end{equation}
where $\vec{F}$ is the position vector and $\vec{F}^{\bot}$ denotes the projection of $\vec{F}$ into its normal bundle. First, we recall the following two results for self-shrinkers from the thesis of the second author \cite{Zhou15}.
\bt(\cite[Theorem 4.1.2]{Zhou15})\label{thm:st:eq:ad} Let $E$ be an open set in $\R^n$. Suppose $f:E\rightarrow \R^m$ is a smooth map, and $\Gamma(f)$ is a smooth self-shrinker. Then  $*\Omega$ (given by \eqref{star Omega})satisfies
\begin{align}
	&\Delta \ast\Omega -\F{1}{2}\la\vec{F},\ast \Omega\ra-\F{|\nb \ast\Omega|^2}{\ast\Omega}\notag\\ &+\bigg(\sum_{i,k}\lambda^2_i  h_{ n+i,ik} ^2 +\sum_{i,j,\alpha} h_{\alpha,ij}  ^{2} + 2\sum_{i<j,k}\lambda_i\lambda_j h_{n+j,ik}h_{n+i,jk} \bigg)\ast\Omega=0.\label{eq:st_ad}
\end{align}
Here $  h_{\alpha,ij}=\la \bnb_{e_i}e_j, e_\alpha\ra$, $\bnb(\nb)$ is the covariant derivative of $\R^{n+m} (\Gamma(f))$, $\Delta$ is the Laplacian of $\Gamma(f)$ and $\vec{F}$ is the position vector of $\Gamma(f)$.
\et
\begin{remark} The proof of the above structure equations is the same as that in \cite{Wang2002} if we replace the vector $\P_t$ by $-\F{\vec{F}}{2}$.
    \end{remark}
\bt(\cite[Theorem 2.2.1]{Zhou15})\label{thm:v_normal} Let $U$ be an open set in $\R^{n+m}$. If $\Sigma$ is a smooth, properly embedded $n$-dimensional submanifold in $U$ such that its point $\vec{F}$ satisfies
$\vec{F}^{\bot}\equiv 0$
, then $\Sigma$ is totally geodesic in $U$.
\et
\br An advantage of our proof is that it works locally. A proof that a smooth, minimal, complete self-shrinker $\Sigma$ with codimension one is totally geodesic can be briefly stated as follows (See Corollary 2.8, \cite{CM12}). If $\Sigma$ is minimal, then $\sqrt{-t}\Sigma=\Sigma$ since $\sqrt{-t}\Sigma$ is a solution of the mean curvature flow. Therefore $\Sigma$ is a smooth minimal cone, the rigidity of $\Sigma$ follows from that a smooth minimal cone is a totally geodesic plane. If a self-shrinker is minimal only in its open subset, then the above derivations are invalid. In this case, we can not claim the self-shrinker is a smooth minimal cone, which is a global property.
\er
\bp Choose any point $\vec{F}_1$ on $\Sigma$. We denote by $\{e_i\}_{i=1}^n$ the orthonormal frame of the tangent bundle and denote by $\{n^\alpha\}_{\alpha=1}^k$ the normal bundle of $\Sigma$ in a neighborhood of $\vec{F}_1$. Without confusion, we assume this neighborhood is still $U\cap \Sigma$. We define a set $V\subset U\cap \Sigma$ as follows:
$$
V=\{\vec{F}:\vec{F}\in U\cap \Sigma,\quad \la\vec{F}, e_i\ra\neq 0\quad \text{for}\quad i=1,\cdots,n\}
$$
We claim that $V$ is an open dense set in $U\cap \Sigma$.\\
\indent It is obvious that $V$ is open. If $V$ is not dense, without loss of generality, we can suppose that there is an open set $W$ in $U\cap \Sigma$ such that $\la\vec{F}, e_1\ra\equiv 0$ in $W$. This gives a representation for the position vector $\vec{F}$ of $\Sigma$ in $W$ as follows:
\begin{align}
    \vec{F}&=\vec{F}^{\bot}+\sum_{i=1}^{n}\la\vec{F},e_i\ra e_i; \notag\\
    &=\sum_{i=2}^{n}\la\vec{F}, e_i\ra e_i;
\end{align}
Then we can take $\{\la\vec{F}, e_2\ra, \cdots, \la\vec{F},e_n\ra\}$ as a coordinate of $W$. This leads to a contradiction since such coordinates implies we have a diffeomorphism from $W$ to an open set of $\R^{n-1}$. However $W$ is an $n$ dimensional open set. Hence $W$ is empty and $V$ is an open dense set of $U\cap \Sigma$. \\
\indent For any normal vector $n^\alpha$, $A^{\alpha}$ denotes the second fundamental form  $(h _{\alpha,ij})$, where $h _{\alpha,ij}=\la \bnb_{e_i}e_j, n^{\alpha}\ra$ and $\bnb$ is the {\Cd} of $\R^{n+m}$. We define the following sets:
$$
V_k=\{\vec{F}:\vec{F}\in V,\quad \mathrm{rank}(A^{\alpha})=k\} \quad \text{for $k=0,1,\cdots, n$}.
$$
It is easy to see that $V=V_0\cup \cup_{k=1}V_k$ and $A^{\alpha}\equiv 0$ on $V_0$. Assume we can show that $V_0$ is dense in $V$. By the continuity of $A^{\alpha}$, we will obtain that $A^{\alpha}\equiv 0$ in $V$. Since $V$ is dense in $U\cap \Sigma$, $A^{\alpha}\equiv 0$ in $U\cap \Sigma$ for any normal vector $n^\alpha$.  This gives that $\Sigma$ is totally geodesic in $U$.\\
\indent Next we show that $V_0$ is dense in $V$. Let $V_k^0$ be the interior of $V_k$ for $k\geq 1$. To prove the denseness of $V_0$ in $V$, it is suffice to prove that $V_k^0$ is empty for $k\geq 1$.\\
\indent Fix $k\geq 1$. Suppose $V_k^0$ is not empty. We emphasize it is an $n$-dimensional  manifold in $\Sigma$. Let $\vec{F}$ be any point in $V_k^0$ written as
\begin{equation}
    \vec{F}=\sum_{i=1}^n\la {F},e_i\ra e_i
    \end{equation}
    Then the map $T:V_k^0\rightarrow \R^n$ given by
    \begin{equation}
        T(\vec{F})=(\la F,e_1\ra,\cdots, \la F,e_n\ra), \vec{F}\in V_k^0
    \end{equation}
    is a local diffeomorphism on $V_k^0$.\\
 \indent    Since $\la \vec{F}, n^\alpha\ra \equiv 0$, taking the derivatives {\wrt} $\{e_i\}_{i=1}^{n}$ gives that
\be\label{eq:midterm}
T(\vec{F})A^\alpha=\sum_k\la \vec{F}, e_k\ra h_{\alpha,ik }\equiv 0;
\ene
Here we used that $\bnb_{e_i}n^{\alpha}=-h_{\alpha,ik}e_k+\sum_{\beta}\la \bnb_{e_i}n^{\alpha}, n^{\beta}\ra n^{\beta}$ and $e_i\la \vec{F}, n^\alpha\ra\equiv 0$.

Consequently $T(\vec{F})\subset ker (A^\alpha)$ which is an $(n-k)$-dimensional submanifold of $\R^n$. This leads to a contradiction. Hence $V_k^0$ is empty for $k\geq 1$. Finally $\Sigma$ is totally geodesic in $U$. The proof is complete.
\ep
As a result, we obtain the following rigidity for self-shrinkers based on an integral technique.
\bt\label{st:A:self-shrinker}  Let $\Gamma(f)$ be a self-shrinker in $\R^{n+m}$, where $f:E\subset\R^n \rightarrow \R^m$ is a smooth map with singular values satisfying $ \lambda_i\lambda_j \leq 1$ for all $i\neq j$. Suppose one of the following conditions holds:
\begin{itemize}
	\item[(i)] $E=\R^n$;
	\item [(ii)] $E=\R^n_{+}=\{(x_1,\cdots,x_{n-1},x_n):x_n>0\}$, $\sup_E |Df|\leq \mu<1$ where $\mu$ is a fixed constant, $f\in C(\bar{E})$ and $f(0)=0$,
\end{itemize}
then $\Gamma(f)$ is an $n$-dimensional plane through the origin over $E$.
\et
\br The following proof is based on the thesis of the second author in \cite[Chapter 4]{Zhou15}. The conclusion of item (ii) is much stronger than item (ii) in Theorem 4.1, because of no linear assumption on the boundary. Item (i) has been obtained in \cite[Theorem 10,(i)]{DW11} with an elliptic method.
\er
\bp For simplicity, we set
$$
\mathcal{A } :=\sum_{i,k}\lambda_i^2  h_{n+i,ik} ^2+\sum_{i,j,\alpha} h_{\alpha,ij} ^2+2\sum_{i<j,k}\lambda_i\lambda_j h_{n+i,jk}h_{n+j,ik}.
$$
Then \eqref{eq:st_ad} can be rewritten as
\begin{equation} \label{eq:estd}
\Delta  \ln(\ast\Omega) - \F{1}{2}\la\vec{F},\nb  \ln(\ast\Omega)\ra +\mathcal{A }\equiv 0.
\end{equation}
Notice that $\nb  \ln(\ast\Omega)=\F{\nb(\ast\Omega)}{\ast\Omega}$ and
\begin{align}
	|\nb\ast\Omega|^{2} 
	&=\sum_k(\sum_i\Omega(e_1,\cdots,\bnb_{e_i}e_{k},\cdots, e_n))^2 \notag\\
	&=\sum_k\big(\sum_{i,\alpha}h_{\alpha,ik}\Omega(e_1,\cdots,\underset{ i -th}{e_\alpha},\cdots, e_n)\big)^2 \label{eq:md:1} \\
	&=\sum_k(\sum_{i}\lambda_i h_{n+i,ik})^{2}(*\Omega)^2,\label{eq:md:2}
\end{align}
we have
\begin{align}
	\mathcal{A} -\F{1}{2n}|\nb   \ln(\ast\Omega) |^{2}&= \mathcal{A } -\F{1}{2n}\sum_{k}(\sum_i\lambda_i h_{n+i,ik})^{2}\notag \\
	&\geq \mathcal{A}-\F{1}{2}\sum_{i,k} \lambda_i^{2} h _{n+i,ik} ^{2}\notag\\
	&\geq \F{1}{2} \sum_{i,k}\lambda_i^2  h_{n+i,ik}^2+ \sum_{i,j,\alpha} h_{\alpha,ij} ^2+2\sum_{i<j,k}\lambda_i\lambda_jh_{n+i,jk}h_{n+j,ik};\notag\\
	&\geq \F{1}{2} \sum_{i,k}\lambda_i^2  h_{n+i,ik} ^2+\sum_{\alpha,i}h_{\alpha,ii}^{2}\notag\\
	&+\sum_{i<j,k}\big( h_{n+i,jk} ^2+ h_{n+j,ik} ^2+2\lambda_i\lambda_jh_{n+i,jk}h_{n+j,ik}\big) \label{eq:get0}\\
	&\geq 0.\notag
\end{align}
In the first inequality, we applied the Cauchy inequality  $(\sum_i\lambda_ih_{n+i,ik} )^2\leq n  \sum_{i}\lambda_i^2 h_{n+i,ik} ^2 $.
In the last inequality, we used the fact that $ \lambda_i\lambda_j  \leq 1$.

Now we use a similar technique as in \cite[Lemma 3.10]{Zhou18a-d}.  Fix $r\geq 1$, we denote by $\phi$ a compact supported smooth function in $\R^{n+m}$
such that $\phi\equiv 1$ on $B_{r}(z_r)$ and $\phi\equiv 0$ outside of $B_{r+1}(z_r)$ with
$|\nb \phi|\leq |D\phi|\leq 2$. Here $D\phi$ and $\nb \phi$ are the gradient of $\phi$ in
$\R^{n+k}$ and $\Sigma$, respectively, and $z_r$ is a point in $\R^{n+m}$ to be determined later.
Multiplying \eqref{eq:estd}  by $\phi^{2}\ex$ and integrating over $\Gamma(f)$, we obtain
\begin{align}
	0&= \int_{\Gamma(f)} \phi^{2} div_{\Gamma(f)}(\ex \nb \ln(\ast\Omega) ) + \int_{\Gamma(f)}\phi^{2}\ex \mathcal{A}\notag\\
	&= -\int_{\Gamma(f)} 2\phi\la\nb \phi, \nb \ln(\ast\Omega) \ra \ex + \int_{\Gamma(f)}\phi^{2}\ex \mathcal{A}\notag\\
	&\geq -2n\int_{\Gamma(f)}|\nb\phi|^{2}\ex +\int_{\Gamma(f)}\phi^{2}\ex(\mathcal{A}-\F{|\nb \ln(\ast\Omega) |^{2}}{2n}).\label{eq:e_d}
\end{align}
In the last step above, we used that $2|\phi\la \nb\phi, \nb \ln(\ast\Omega) \ra|\leq 2n|\nb\phi|^{2}+\phi^2\F{|\nb \ln(\ast\Omega) |^2}{2n}$. By our assumption of $\phi$, \eqref{eq:e_d} implies that
\begin{equation}\label{det:D}
\begin{aligned}
	\int_{\Gamma(f)\cap B_{r}(z_r)}\ex(\mathcal{A}-\F{|\nb \ln(\ast\Omega) |^{2}}{2n})&\leq 2n\int_{\Gamma(f)\cap B_{r+1}(z_r)}|\nb\phi|^{2}\ex\\
	&\leq C(n,m) r^{n}e^{-\F{r^2}{4}}.
\end{aligned}
\end{equation}
Here we used the fact that $\Gamma(f)$ has the polynomial growth property in both cases. In the case of item (i), this property was obtained in \cite{CZ13} and \cite{DX13}. In the case of item (ii), such property is easily derived from the assumption $|Df|\leq \mu<1$.\\
\indent In the case of item (i), we choose $z_r$ as the origin of $\R^{n+m}$. In the case of item (ii), $z_r=(0,0,\cdots, 0,\underset{n-th }{r+1}, 0,\cdots, 0)$. Letting $r$ go to infinity, \eqref{eq:get0} and \eqref{det:D} indicate that
\begin{align*}
	0&\geq \mathcal{A}-\F{|\nb  \ln(\ast\Omega)|^2}{2n}\\
	&\geq \F{1}{2} \sum_{i,k}\lambda_i^2  h_{n+i,ik} ^2+\sum_{i,k} h_{n+i,ik} ^{2}+\sum_{i<j,k}\big( h_{n+i,jk} ^2+ h_{n+j,ik} ^2+2\lambda_i\lambda_jh_{n+i,jk}h_{n+j,ik}\big).
\end{align*}
on any interior point of $\R^n$ or $\R^n_+$.
Since $ \lambda_i\lambda_j  \leq 1$ for any $i\neq j$, we conclude that
\begin{align}
		& h_{n+i,ik}=0\quad &\text{for any $i, k$;}\label{eq:est_b}\\
	& h_{n+i,jk} ^2+ h_{n+j,ik} ^2+2\lambda_i\lambda_jh_{n+i,jk}h_{n+j,ik}=0\quad &\text{for any $i,j,k$.}\label{eq:est_c}
\end{align}

Next, we claim that the mean curvature vector of $\Sigma$ vanishes, i.e.
\begin{equation}\label{eq:min}
\vec{H}= \sum_{i} (\sum_j h_{n+i,jj})e_{n+i}\equiv 0.
\end{equation}
Item (i) and Item (ii) are both concluded from Theorem \ref{thm:v_normal}.  \\
\indent Now fix any $i_0\in\{1,\cdots,m\}$, and we proceed as follows:
\begin{enumerate}
	\item By \eqref{eq:est_b}, $h_{n+i_0,i_0i_0}=0$.
	\item Fix $j\neq i_0$. \begin{enumerate}
		\item If $ \lambda_{i_0}\lambda_j \leq  1-\delta<1$ for some $\delta\in (0,1)$,  \eqref{eq:est_c} gives that
		\begin{align*}
			0=&\delta( h_{n+i_0,jk} ^2+ h_{n+j,i_0k} ^2)+(1-\delta)(h_{n+i_0,jk} ^2+ h_{n+j,i_0k} ^2)\\
			&+2\F{\lambda_{i_0}\lambda_j}{1-\delta}h_{n+i_0,jk}h_{n+j,i_0k} .
		\end{align*}
		Then $h_{n+i_0,jk}=0$ for any $k$. Let $k=j$, then we get $h_{n+i_0,jj}=0$.
		\item Otherwise, it holds that $\lambda_{i_0}\lambda_j =1$. Then $\lambda_{j}\neq 0$, \eqref{eq:est_b} implies that $h_{n+j,jk}=0$ for any $k$. In particular, $h_{n+j,ji_0}=0$.  With this fact, \eqref{eq:est_c} indicates that
		\begin{align*}
		0= h_{n+i_0,jj} ^2+ h_{n+j,i_0j}^2+2\lambda_i\lambda_jh_{n+i_0,jj}h_{n+j,i_0j}= h_{n+i_0,jj} ^2.
			\end{align*}
		Hence $h_{n+i_0,jj}=0$.
	\end{enumerate}
    \end{enumerate}
As a result, we conclude that $h_{n+i_0,jj}=0$ for any $j\neq i_0$. Consequently, $\sum_{j}h_{n+i_0,jj}=0$. Since we choose $i_0$ arbitrarily, the claim in \eqref{eq:min} holds true. The proof is complete.
\ep
\subsection{Smoothness of {\Lp} self-shrinker  graphs}\label{Sec smooth}

In this section we derive the smoothness of {\Lp} minimal-type graphs in Theorem \ref{smooth:c-minimal}. We will generalize \cite[Theorem 5.3]{Fischer80} and \cite[Theorem 4.1]{Wang2004} to the setting of {\Lp} self-shrinkers.\\
\indent The following condition will be sufficient for our purposes in the proof of Theorem \ref{main:thm:A}.
\begin{definition}\label{lp:strictly:decreasing} Let $f:E\subset \R^n\rightarrow\R^m$ be a {\Lp} map. We say that $f$ is \textit{strictly length decreasing} if  $ \lambda_i  <1-\Sc$ for any $i  \in \{1,2,\cdots, n\}$ and for some constant $\Sc\in (0,1)$, whenever $df$ is well-defined and $\lambda_i's$ are the singular values of $df$.
    \end{definition}
Now we use a more general definition including minimal graphs and self-shrinkers in Euclidean spaces.
\begin{definition} For any constant $c\geq 0$, an $n$-dimensional smooth $c$-\textit{minimal} submanifold $\Sigma$ in $\R^{n+m}$ is a stationary point of the following area functional $\mathcal{F}(\Sigma)$ given by
    \begin{equation}
          \mathcal{F}(\Sigma):=\int_{\Sigma}\exp(-c\F{|z|^2}{4})d\mathcal{H}^n\quad
        \end{equation}
where $z=(x_1,\cdots, x_n, y^1,\cdots, y^m)\in\Sigma$ and $\mathcal{H}^n$ is the $n$-dimensional Hausdorff measure.
    \end{definition}
 \begin{remark} A $0$-minimal submanifold is precisely a minimal submanifold, and a $1$-minimal submanifold is precisely a self-shrinker.
     \end{remark}
     \indent Suppose $f$ is a {\Lp} map from $E$ into $\R^m$. Then $\mathcal{F}(\Gamma(f))$ takes the form
     $$
     \mathcal{F}(\Gamma(f))=\int_{E}\exp \big(-c\F{|\vec{F}|^2}{4}\big)\sqrt{{\rm det}(I+df^T df)}dx
     $$
where   $|\vec{F}|^2=\sum_{i=1}^n x_i^2+\sum_{A=1}^m (f^A)^2$.
\begin{definition}\label{weak:solution:A}  We say that a {\Lp} function $f:E\subset \R^n\rightarrow \R^m$ is \emph{a weak solution to the $c$-minimal surface system} if it satisfies that
    \begin{equation}
    \begin{aligned}
        &\int_{E}\exp  \big( -c\F{|\vec{F}|^2}{4}\big)\bigg(  \sum_{i,j,A}\sqrt{g}g^{ij}\la \F{\P f^{A}} {\P x_i}, \F{\P \vp^{A}}{\P x_j}\ra -\F{1}{2}c\la \vec{F},\vp\ra\bigg )  dx =0
        \end{aligned}
        \end{equation}
for any  map $\vp =(\vp ^1 ,\cdots, \vp ^m )\in C_{c}^1(E;\R^m)$.
    \end{definition}
 Let $f$ be as in \eqref{weak:solution:A}. Then for each $A=1,\cdots,m$, $f^A$ is the weak solution to the following elliptic equation
 \begin{equation}\label{general:equation}
     g^{ij}\F{\P^2 f^A}{\P x_i\P x_j}+cq(f,\F{\P f^A}{\P x_1},\cdots,\F{\P f^A}{\P x_n})=0,\quad A=1,\cdots, m,
     \end{equation}
where $q$ is smooth with respect to each of its components and $(g^{ij})$ is a smooth matrix-value function depending on  $df^1,df^2,\cdots,df^m$. Then the first variation of $\mathcal{F}(\Gamma(f))$, still denoted by $\vec{H}$, satisfies that
\begin{equation}\label{first:variation}
    \vec{H}+c\F{\vec{F}^{\bot}}{2}=0.
    \end{equation}
   \indent   The following result is implicitly used in the proof of \cite[Theorem 4.1]{Wang2004}. For the convenience of the readers, we include its proof here.
     \begin{theorem}\label{lpcone} Suppose $T$ is an $n$-dimensional {\Lp} strictly length decreasing minimal cone as a graph in $\R^{n+m}$, which is smooth away from vertex. Then $T$ is a plane.
\end{theorem}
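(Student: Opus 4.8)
The plan is to reduce the statement to the smooth Bernstein-type theorem already available, namely Theorem~\ref{smooth:bernstein:theorem}, by exploiting the scaling invariance of a cone together with its graphical structure. Since $T = \Gamma(f)$ is a cone with vertex at the origin, the map $f:\R^n\to\R^m$ (which is defined a.e. and Lipschitz, being strictly length decreasing) is homogeneous of degree one: $f(tx) = t f(x)$ for all $t>0$. In particular $f$ is determined by its restriction to the unit sphere, and away from the vertex the graph is smooth and minimal by hypothesis. Because $f$ is positively $1$-homogeneous and Lipschitz, it is in fact Lipschitz on all of $\R^n$ (including through the origin), so $\Gamma(f)$ is the graph of a globally Lipschitz, strictly length decreasing, stationary (minimal as a varifold) map.

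First I would invoke the smoothness theory: $\Gamma(f)$ is a Lipschitz strictly length decreasing minimal graph, hence by \cite[Theorem 4.1]{Wang2004} (equivalently Theorem~\ref{smooth:c-minimal} with $c=0$) it is smooth everywhere, in particular smooth across the vertex. Next, once $f$ is smooth and $1$-homogeneous on $\R^n$, the homogeneity forces $f$ to be linear: differentiating $f(tx)=tf(x)$ in $t$ at $t=1$ gives $Df(x)\cdot x = f(x)$, i.e. the second derivative $D^2f$ annihilates the radial direction, and combined with smoothness at the origin (where the $1$-homogeneous condition plus $C^1$ regularity already pins down $f$ to agree with its differential $Df(0)$) one concludes $f = Df(0)$ is an affine — in fact linear, since $f(0)=0$ — map. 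Therefore $T=\Gamma(f)$ is a plane through the origin.

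Alternatively, and perhaps more in the spirit of the surrounding arguments, I would apply the conclusion \eqref{star:Omega} of Theorem~\ref{smooth:bernstein:theorem}(1): on the smooth minimal graph $\Gamma(f)$ one has $\Delta \ln *\Omega + \Sc\sum_{\alpha,l,k} h_{\alpha,lk}^2 \le 0$, so $\ln *\Omega$ is superharmonic on $\Gamma(f)\setminus\{0\}$, while $*\Omega\le 1$ everywhere and, by the cone structure, $*\Omega$ is constant along rays from the vertex. Homogeneity then makes $\ln *\Omega$ a bounded superharmonic function on the smooth cone (a manifold with at most a removable point singularity once smoothness at the vertex is known), forcing it to be constant, whence $h_{\alpha,lk}\equiv 0$, i.e.\ $\Gamma(f)$ is totally geodesic, hence a plane.

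The main obstacle is the regularity at the vertex: the hypothesis only gives smoothness away from the vertex, so one must genuinely use the strictly length decreasing condition to feed into the smoothness theorem \cite[Theorem 4.1]{Wang2004} (or Theorem~\ref{smooth:c-minimal}) to upgrade $T$ to a smooth submanifold through the origin — without this one cannot legitimately differentiate the homogeneity relation at $0$ nor treat $\ln *\Omega$ as superharmonic across the vertex. Everything else (the homogeneity $\Rightarrow$ linearity step, or the maximum/strong maximum principle argument on $*\Omega$) is then routine.
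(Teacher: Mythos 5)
There is a genuine circularity in your primary route. You propose to upgrade $T$ to a graph that is smooth \emph{through the vertex} by invoking \cite[Theorem 4.1]{Wang2004} (equivalently Theorem~\ref{smooth:c-minimal} with $c=0$). But in this paper — and in Wang's original argument — that regularity theorem is itself \emph{proved} by blowing up at a putative singular point, obtaining a Lipschitz strictly length decreasing minimal cone, and then applying precisely the cone–rigidity statement of Theorem~\ref{lpcone} (twice) before concluding via Allard's theorem. Indeed the paper introduces Theorem~\ref{lpcone} with the remark that it ``is implicitly used in the proof of \cite[Theorem 4.1]{Wang2004}.'' So you cannot use the regularity theorem here; Theorem~\ref{lpcone} must be established with only the hypotheses as stated, i.e.\ smoothness \emph{away} from the vertex. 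The same objection applies to the parenthetical in your alternative argument, where you again assume the vertex singularity has been removed before running the maximum principle.

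The fix — and the paper's actual argument — is your alternative route, but exploiting the cone structure to avoid the vertex entirely. On $T\setminus\{0\}$, which is smooth by hypothesis, part (1) of Theorem~\ref{smooth:bernstein:theorem} gives $\Delta\ln *\Omega\le -\Sc\sum_{\alpha,l,k}h_{\alpha,lk}^2\le 0$. Because $T$ is a cone, $*\Omega$ is constant along rays, so
\begin{equation*}
\inf_{T\setminus\{0\}}*\Omega=\min_{T\cap S^{n+m-1}}*\Omega,
\end{equation*}
and this minimum is attained at an \emph{interior} point of the smooth manifold $T\setminus\{0\}$ (it is strictly positive by the length decreasing bound). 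The strong maximum principle applied to the superharmonic function $\ln *\Omega$ on $T\setminus\{0\}$ then forces $*\Omega$ to be constant, whence $h_{\alpha,lk}\equiv 0$ and $T\setminus\{0\}$ is totally geodesic; for $n\ge 2$ connectedness of $T\setminus\{0\}$ (it is a graph over $\R^n\setminus\{0\}$) shows $T$ is a single $n$-plane, and $n=1$ is handled directly. No regularity at the vertex, and no homogeneity-differentiation at the origin, is ever needed. Your homogeneity $\Rightarrow$ linearity step is fine as pure calculus, but it is only legitimate after smoothness at $0$ is known, which is exactly what cannot be assumed here.
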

\bp In the case of $n=1$, $T$ is the union of two half lines intersecting at the vertex. It is contained in a plane in $\R^{n+m}$. The conclusion is obvious.\\
\indent Now assume $n=2$. Under our assumption, we have \eqref{star:Omega}, i.e.
\begin{equation}
	\Delta \ln *\Omega\leq -\Sc \sum_{\alpha,l,k}h^2_{\alpha,lk}
\end{equation}
on $T$ except its vertex. Since $T$ is a cone, the minimum of $*\Omega$ can be achieved at some point on the intersection of $T$ and the unit sphere in $\R^{n+m}$. By the maximum principle, $*\Omega$ is a positive constant and $\sum h^2_{\alpha,lk}=0$. Thus $T$ is totally geodesic. Since $n\geq 2$, the set $spt(T)$ with the origin removed is connected. Therefore, $spt(T)$ is contained in a $n$-dimensional linear space in $\R^{n+m}$. By the connectedness, $T$ is just this $n$-dimensional linear space.
\ep
\begin{theorem}\label{smooth:c-minimal} Suppose $f:E\subset\R^n\rightarrow \R^m$ is a {\Lp} strictly length decreasing  map  as a weak solution to the $c$-minimal surface system in \eqref{first:variation}. Then $f$ is smooth over $E$.
\end{theorem}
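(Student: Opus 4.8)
The plan is to adapt the regularity arguments of Wang \cite[Theorem 4.1]{Wang2004} and Fischer-Colbrie \cite[Theorem 5.3]{Fischer80}, that is, to combine Allard's regularity theorem with the Almgren--Federer dimension-reduction scheme, using Theorem \ref{lpcone} as the decisive geometric input. The weight $\exp(-c|z|^2/4)$ is smooth and strictly positive, so it disappears at infinitesimal scale: every blow-up limit of $\Gamma(f)$ will be an honest (unweighted) minimal cone, and it is precisely these cones that Theorem \ref{lpcone} controls.

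First I would record the varifold picture. Since $f$ is Lipschitz, $\Gamma(f)$ is an $n$-rectifiable set of locally finite $\mathcal H^n$-measure and defines an integer multiplicity varifold $V$ of multiplicity one; by Lemma \ref{lm:gij} its area element $\sqrt{\det(I+df^{T}df)}$ is pinched between $1$ and $(1+L^2)^{n/2}$ with $L=\sup_E|Df|$, so $V$ has density ratios bounded above and $\Theta^n(\|V\|,p)\ge1$ for every $p$. Because $\mathcal F$ is parametrization-independent, the weak Euler--Lagrange equation of Definition \ref{weak:solution:A} for vertical variations of the graph already forces $V$ to be stationary for $\mathcal F$ with respect to all ambient vector fields, i.e.\ a $c$-minimal (weighted-stationary) varifold. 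Such a $V$ obeys a monotonicity formula with a smooth, lower-order weight correction on any bounded region, so tangent varifolds $C$ exist at every point and are stationary minimal cones; moreover each $C$ is again the multiplicity-one varifold of the graph of a Lipschitz, strictly length decreasing, $1$-homogeneous map $\tilde f\colon\R^n\to\R^m$, since the Lipschitz bound and the scale-invariant closed condition $\lambda_i<1-\Sc$ both pass to the limit.

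The heart of the proof is an induction on $n$ showing $f\in C^{1,\alpha}_{\mathrm{loc}}(E)$. For $n=1$ the system \eqref{general:equation} is an elliptic ODE system and smoothness is classical. Assume the statement in all dimensions $<n$, and let $C$ be a tangent cone of $V$; I claim $C$ is an $n$-plane. If $C$ has a singular point $q\ne 0$, then, as $C$ is a cone, the tangent cone of $C$ at $q$ splits as $C_1\times\R\,q$ with $C_1$ an $(n-1)$-dimensional Lipschitz, strictly length decreasing minimal graph cone. Writing $q=(q_1,q_2)\in\R^n\times\R^m$, translation invariance of the graph in the direction $q$ forces $q_1\ne0$, so after rotating $\R^n$ we may take $q_1=e_n$; then $C_1$ is the graph of a Lipschitz, strictly length decreasing weak solution $\tilde g\colon\R^{n-1}\to\R^m$ of the minimal surface system, which by the inductive hypothesis is smooth. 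Hence $C_1$ is smooth away from its vertex, so Theorem \ref{lpcone} in dimension $n-1$ makes $C_1$ a plane and $C_1\times\R\,q$ a plane, contradicting $q\in\operatorname{sing} C$. Therefore $C$ is smooth away from its vertex, and Theorem \ref{lpcone} in dimension $n$ forces $C$ to be an $n$-plane through the origin, with multiplicity one. Since every tangent cone of $V$ at every point is a multiplicity-one plane, Allard's regularity theorem shows $\Gamma(f)$ is a $C^{1,\alpha}$ submanifold near each of its points, that is, $f\in C^{1,\alpha}_{\mathrm{loc}}(E)$.

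Finally I would bootstrap. With $f\in C^{1,\alpha}$ the matrix $(g^{ij})=(g^{ij}(df))$ and the lower-order term $q(f,Df)$ in \eqref{general:equation} are $C^\alpha$, and by Lemma \ref{lm:gij} (with $\sigma_{ij}=\delta_{ij}$) the system $g^{ij}\partial_i\partial_jf^A=-cq$ is uniformly elliptic, $\tfrac{1}{1+L^2}\delta^{ij}\le g^{ij}\le\delta^{ij}$; interior Schauder estimates upgrade $f$ to $C^{2,\alpha}$, then $g^{ij},q\in C^{1,\alpha}$ give $f\in C^{3,\alpha}$, and iterating yields $f\in C^\infty(E)$. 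The main obstacle is the middle step: pushing the Almgren--Federer machinery through in the weighted setting, i.e.\ establishing the requisite compactness of the class of $c$-minimal Lipschitz strictly length decreasing graphs together with the perturbed monotonicity formula, and verifying that the blow-up limits remain multiplicity-one graphs to which Theorem \ref{lpcone} applies. It is exactly here that the strict length decreasing hypothesis does its work---keeping the limits graphical and feeding the cone rigidity.
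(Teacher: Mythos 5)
Your proposal is correct and follows essentially the same route as the paper: reduce to $C^{1,\alpha}$ regularity, use Federer dimension reduction on tangent cones with Theorem \ref{lpcone} as the rigidity input for smooth strictly length decreasing cones, apply Allard's regularity theorem, and then bootstrap via the elliptic system \eqref{general:equation}. Your organization of the dimension reduction as an induction on $n$ is a cleaner packaging of the paper's iterated blow-up, but the substance is identical.
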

\bp According to equation \eqref{general:equation}, to show $f$ is smooth, it suffices that $f$ is $C^{1,\alpha}$. Because $\vec{H}+c\F{\vec{F}}{2}=0$ holds in the weak sense, $\Gamma(f)$, the graph of $f$, has locally uniformly bounded first variation \eqref{first:variation}. Therefore the tangent cone of $\Gamma(f)$, $T$, at $p$ exists. Moreover $T$ is a strictly length decreasing {\Lp}
graph as the weak solution to the minimal surface system.

We use a similar idea as  in Yuan\cite{Yuan2002}. Suppose not. Assume  $f$ is  singular at  some point $p_{0}\in E$. We can blow-up the graph of $f$ and derive a minimal cone $T$ by the monotonicity formula. Moreover, $T$ remains strictly length decreasing.

We claim that $T$ is smooth away from its vertex. Otherwise,  $T$ is singular at some $p\neq p_{0}$. We blow up $T$ at $p$ to get a lower dimensional minimal cone $T'$ cross a line $l$. If $T'$ still has singularity away from its vertex, we repeat the blow up process, and finally, we will stop with a minimal cone singular only at its vertex. Using Theorem \ref{lpcone}, $T$ is flat. This contradicts  the Allard's regularity theorem.

Hence, $T$ is smooth away from its vertex.   Using Theorem \ref{lpcone} again, we see that $T$ is a plane. Using Allard's regularity theorem,  $f$ is regular at $ p_{0}$.

Therefore, $f$ is smooth.
\ep
Combining the conclusion (2) of Theorem \ref{smooth:bernstein:theorem} with Theorem \ref{smooth:c-minimal}, yields the following result.
\bt \label{smooth:B} Let $E$ be $\R^n$ or  $\R^n_+$.  Suppose $f:E \rightarrow \R^m$ is  a {\Lp} strictly length decreasing map  as the weak solution of the minimal surface system (the case $c=0$ of  \eqref{first:variation}).  In the case of $E=\R^n_+$, $f=0$ on $\P E$. Then $f$ is a linear map over $E$.
\et
The following rigidity result on self-shrinkers follows from combining Theorem \ref{st:A:self-shrinker} and Theorem \ref{smooth:c-minimal}.
\bt\label{lp:self-shrinker} Let $E$ be $\R^n$ or  $\R^n_+$. Suppose $f:E \rightarrow \R^m$ is a {\Lp} strictly length decreasing map as the weak solution of the 1-minimal surface (self-shrinker) system. In the case $E=\R^n_+$, $f=0$ on $\P \R^n_+$. Then $f$ is a linear map over $\R^n_+$.
\et

\section{Proof of Theorem \ref{main:thm:A}}\label{Sec pf thm:A}
In this  section, we prove Theorem \ref{main:thm:A}.  We begin by establishing the short-time existence of the MCF flow \eqref{MCF3} in Lemma \ref{lm:st:existence}. Then we show that the length decreasing property  \eqref{deLA} is preserved along this flow  in Lemma \ref{first:step:MCF}. The corresponding long-time existence is given in Lemma \ref{second:step:MCF}. Finally, we conclude Theorem \ref{main:thm:A}.
\subsection{The short-time existence}
First we recall some definitions from Lieberman \cite{Lieberman1996} with a minor modification. Let $M$ be a Riemannian manifold with  metric $\sigma_M$ and let $E\subset M$ be a smooth bounded domain. We denote $d:M\times M\rightarrow[0,\infty]$ as the distance function and denote $\mathcal{E} =E\times \R $. For any two points $X=(x,t),Y=(y,s)\in \mathcal{E}$, the distance between $X$ and $Y$ in  $\mathcal{E}$ is given by
\begin{equation}
    |X-Y|:=\max\{d(x,y), |t-s|^{\F{1}{2}}\}.
    \end{equation}
  Suppose $f:W\subset \mathcal{E}\rightarrow \mathbb{R}$. For any $\alpha\in (0,1]$ and $\beta\in (0, 2]$, we define
\begin{align*}
    \text [f]_{\alpha}(X):=\sup_{Y\in W \setminus\{X\}  }\frac{|f(Y)-f(X)|}{ |X-Y|^{\alpha}}; \quad \langle f\rangle_{\beta}(X):=\sup_{Y=(y,s)\in W\setminus  \{X\} }\frac{|f(Y)-f(X)|}{ |s-t|^{\beta/2}}.
\end{align*}
For any $X=(x,t)\in W$, 
  any integer $l\geq 0$ and any $\alpha \in (0,1]$, 
set
\begin{gather*}
    |D_{x}^lD_t^jf|(X):=\sup_{\mathcal{V}_1,\cdots,\mathcal{V}_l}|D^l_{\mathcal{V}_1,\cdots,\mathcal{V}_l}D_t^jf|(X);\\
    [D_{x}^lD_t^jf]_\alpha(X):=\sup_{\mathcal{V}_1,\cdots,\mathcal{V}_l}[D^l_{\mathcal{V}_1,\cdots,\mathcal{V}_l}D_t^jf]_\alpha(X);\\
      \langle D_{x}^lD_t^jf\rangle_{\alpha+1}(X):=\sup_{\mathcal{V}_1,\cdots,\mathcal{V}_l}\langle D^l_{\mathcal{V}_1,\cdots,\mathcal{V}_l}D_t^jf\rangle_{\alpha+1}(X),
    \end{gather*}
where $\{\mathcal{V}_1,\cdots, \mathcal{V}_l\}$ ranges over all unit vector fields near $x$. Define
\begin{align*}
 |f|_{k+\alpha }(X):=&  \sum_{l+2j\leq k } | D _{x}^{l}D_{t} ^{j}f|(X)+\sum_{l+2j=k} [D_{x}^lD_t^jf]_{\alpha }(X)\\
 &+ \sum_{l+2j=k-1}       \langle D^l_{x}D^j_tf      \rangle_{\alpha+1}(X),
 \end{align*}
 and $|f|_{k+\alpha,W}:=\sup_{X\in W}|f|_{k+\alpha}(X)$.

 Let  $f=(f^1,\cdots, f^m):W \subset \mathcal{E}\rightarrow \R^m$($m\geq 2 $), we say that $f\in H_{k+\alpha}(W)$ if $|f|_{k+\alpha,W}:=\sum_{A=1}^m |f^A|_{k+\alpha}(W)<\infty$.

For the short-time existence of the MCF in \eqref{MCF3}, the parabolic system \eqref{MCF} admits an analysis analogous to that of a scalar parabolic equation.
\bl \label{lm:st:existence} Suppose $\psi:\bar{E}\rightarrow \R^m$ is smooth. Then there exists $\Sc>0$ such that the mean curvature flow in \eqref{MCF3} exists smoothly on $[0,\Sc)$.
    \el
     The strategy is to generalize the proof of Lieberman \cite[Theorem 8.2]{Lieberman1996} in the setting of \eqref{MCF3} on Riemannian manifolds.
    \begin{proof}
     We give a sketch of the proof on Riemannian manifolds for completeness.
Fix $\theta\in (0,1)$ and denote $V=\bar{E}\times [0,\Sc)$, where $\Sc$ is a positive constant to be determined later.  Set $\zeta:=  |\psi|_{1+\theta, V}+1 $, here $\psi(x,t)\equiv \psi(x)$. Consider the set
    \begin{equation}\label{de:At}
        \mathcal{F}:=\{ v(x,t)\in H_{1+\theta}(V): |v|_{1+\theta,V} \leq \zeta \},
    \end{equation}
    It is obvious that $\psi\in \mathcal{F}$.

    Now define an operator $J:  \mathcal{F}\rightarrow H_{1+\theta}(V)$ by setting  $Jv=u$ if, in any local coordinate $\{x_{1}, \ldots,x_{n}\}$ on $E$, it holds that
     \begin{equation} \label{def:AB}
   \left\{\begin{aligned}
    L^A(v)[u^{A}] &=0,\quad A=1\cdots,m\quad{\text{where}}\quad L^A(v): =-\P_t  +g^{ij}(v) \partial_{ij};
    \\ u(x,0)&=\psi(x),\ x\in E; \quad u(x,t)=\psi(x),\  x \in \P E,
    \end{aligned}\right.
     \end{equation}
     where $g_{ij}(v)  =\delta_{ij} +\sum _{A}v_{i}^{A}v_{j}^{A}$ and  $(g^{ij}(v)) =(g_{ij}(v)) ^{-1}$.\\
    \indent Since $v\in H_{1+\theta}(V)$ is fixed, $ I\geq (g^{ij})\geq \frac{1}{C} I$, where $I$ is the identity matrix and $C$ is a positive constant depending  only  on $v$. Thus \eqref{def:AB} is actually a system of m Cauchy-Dirichlet problems with bounded $C^\theta(E)$ coefficients.
    The Perron process for \eqref{def:AB} for parabolic systems,  \cite[Chapter III, Section 4]{Lieberman1996}), and the comparison principle, remain valid on Riemannian manifolds. Therefore, the conclusion of \cite[Chapter V, Theorem 5.14]{Lieberman1996} still holds for \eqref{def:AB} on $E$. Namely, for each $A=1,\cdots, m$, there exists a unique solution $u^A\in H_{2+\alpha}  (\bar{E}\times [0,\Sc))$ to \eqref{def:AB}, for some $\alpha\in (0,1)$. Moreover for $u=(u^1,\cdots, u^A)$, it holds that
    \begin{equation*}
    	|u|_{H_{1+\theta}} (V)\leq C(\zeta)|\psi|_{1+\theta}\leq C(\zeta).
    \end{equation*}
   By the Newton-Leibniz formula,
    \begin{equation*}
     |u-\psi|_{H_{2+\alpha}(V)}   \leq C( \zeta)\Sc.
    \end{equation*}
When $\psi\equiv 0$, this implies that $J$ is continuous.
One can choose $\Sc$ small enough such that $u$ is contained in $\mathcal{F}$, and thus $J$ is a map from $\mathcal{F}$ to $\mathcal{F}$. Notice that $\mathcal{F}$ is a compact and convex set. Applying the Schauder fixed point theorem \cite[Theorem 8.1]{Lieberman1996}, there exists a fixed point $u\in \mathcal{F}$ such that $Ju=u$. Thus we obtain the short-time existence of the MCF flow \eqref{MCF3}.
\end{proof}
\subsection{The long-time existence}
	Suppose the MCF $F(x,t)=(x,f_{t}(x))$ in \eqref{MCF3} exists smoothly on $E\times[0,T)$.
 \bl\label{first:step:MCF} Suppose the assumption \eqref{condition:A} holds. Then there exists $\Sc \in (0,1)$, such that
\begin{equation}\label{deLA}
    \sup_{E}|Df_t|\leq 1-\Sc
\end{equation}
for any $t\in [0, T)$, where $\Sc $  depends only on $\psi$ and the assumption \eqref{condition:A}.
\el
\begin{proof}  Since $\sup_{\bar{E}}|D\psi|<1$, by continuity, there is a maximal time $T_0\leq T$ such that $\sup_{E}|Df_t|<1 $ on $[0,T_0)$. Taking $\mu=1$ and applying Proposition \ref{keyLboundary:estimate}, we derive
\begin{equation}
    \sup_{\P E}|Df_t|\leq \frac{ \omega(\psi)}{\delta} +  |D\psi |+32 n \delta   |D^{2} \psi |<1-\Sc
\end{equation}
for any fixed $\delta\in (0, \delta_0]$ satisfying \eqref{condition:A} and $t\in [0,T_0)$. Here $\Sc$ depends  only on assumption \eqref{condition:A}. In particular, for notation abuse, $\sup_E|D\psi|<1-\Sc$.
By Proposition \ref{prop:preserved}, this implies that  $\sup_{E}|Df_t|<1-\Sc$ for any $t\in [0,T_0)$. Since $T_0$ is maximal, it follows that
$T_0=T$.
\end{proof}
\rm The long-time existence of the MCF flow \eqref{MCF3} under the assumption \eqref{condition:A} follows from Lemma \ref{first:step:MCF} and Theorem \ref{lp:self-shrinker}.
\bl\label{second:step:MCF}Suppose the assumption \eqref{condition:A} holds. Then the graphic mean curvature flow $F(x,t)$ in \eqref{MCF3} can be extended over time $T$.
\el
\begin{proof}
The main tools we will use are the local regularity theory for mean curvature flow, established by White  \cite{White2005,White21}. This technique was first employed by Wang \cite{Wang2002} to show long-time existence for the mean curvature flow of closed graphs in arbitrary codimension. We adopt Wang's method with a modification to tackle the same problem for the MCF flow in \eqref{MCF3} with boundary.

 Assume $E$ is isometrically embedded in to  Euclidean space $\R^{n_0}$ for some sufficiently large $n_0$. Fix a space-time point $(Y,T)\in (\R^{n_0+m},\R_+)$. Define the backward heat kernel by
\begin{equation}
    \rho_{(Y,T)}(y,t):=\frac{1}{ \big(4\pi(T-t)\big) ^{\frac{n}{2}} }\exp\bigg( -\F{|y-Y|^2}{4(T-t)}\bigg),\quad y\in \R^{n_0}\times  \R^m, t<T,
    \end{equation}
here $n$ is the dimension of $F(x,t)$ as a submanifold in $\R^{n_0+m}$.
Define the Gaussian density of the MCF flow $F(x,t)$ in \eqref{MCF3} by
\begin{equation}\label{existence:Gauss:density}
    \Theta(Y,T):=\lim_{t\rightarrow T}\int_{  F_t(E)}\phi(|y-Y|)\rho_{Y,T}(y,t)\rm d  \mathcal{H}^n,
    \end{equation}
    where $F_t(E)$ denotes the graph of $f_t(x)$ over $E$, $\mathcal{H}^n$ denotes the $n$-dimensional \rm{Hausdorff} measure, and $\phi(t)\in C_{c}^{\infty}[0,1]$  with $\phi\equiv 1$ in $[0,\F{1}{2}]$ and $\phi'\leq 0$. The existence in \eqref{existence:Gauss:density} follows from \cite[Corollary 6.2]{White21}, since according to \eqref{deLA}, the area of $F_t(E)$ is uniformly bounded,
    \begin{equation} \label{del:estimates}
       \mathcal{H}^n (F_t(E))\leq \int_{E}\sqrt{(1+\lambda_1^2)\cdots (1+\lambda_n^2)}  d \mathcal{H}^n\leq  2^{\F{n}{2}}\mathcal{H}^n(E),
        \end{equation}
       for any $t\in [0, T)$. It is not hard to see that $\Theta(Y,T)$ is independent of the choice of $\phi$ and the isometric embedding of $E$ .

        Let $Y=(u,v)$ be any space limit of $F(x,t)$ in $\R^{n_0+m}$ as $t\rightarrow T$, where $u\in \bar{E}$ and $v\in \R^m$. To compute $\Theta(Y,T)$, we consider the parabolic dilation of space-time space $\R^{n_0}\times \R^m \times [0,+\infty)$, given by
       \begin{equation}\label{parab dila}
         \mathcal{D} _{\iota}(y,t)=(     \iota(y-Y),\iota^2(t-T))=(\iota(x-u), \iota (f_t(x)-v), \iota^2(t-T)).
           \end{equation}
  Let $S$ be the total space of the mean curvature flow $F_t(E)$ for all $t\in [0, T)$. Denote by $S^{\iota}$  the image $  \mathcal{D}_\iota(S)\subset \R^{n_0+m}\times [-\iota^2T, 0]$. By \eqref{del:estimates} and  \cite[Section 10]{White21} , for any sequence $\{\iota_i\}_{i=1}^\infty$ converging to $\infty$, there exists a subsequence, still denoted by $\{\iota_i\}_{i=1}^\infty$, such that $S^{\iota_i}$ converges to a tangent flow $S^\infty\subset \R^{n_0+m}\times ( -\infty, 0]$. Moreover, $S^\infty$ is invariant under $  \mathcal{D}_\iota$, i.e. $S^\infty$ is  self-similar. For each $t<0$, $S^\infty(t)$ denotes the space slice of the tangent flow $S^\infty$. Furthermore, $S^{\infty}(t)= \sqrt{-t} S^{\infty }(-1)$, where $S^{\infty}(-1)$ is given by the graph of a function $f_\infty$,  which is a {\Lp} weak-solution to the $1$-minimal surface system(  self-shrinker \eqref{first:variation}) over a domain $E'$. \\
  \indent From Lemma \ref{first:step:MCF}, each $f_t$ is strictly length decreasing. Notice that the property of strictly length decreasing is preserved under the parabolic dilation \eqref{parab dila}. Hence, $f_\infty$ is {\Lp}  and strictly length decreasing.

   If $Y$ lies in the interior of $E\times \R^{m}$, then $E'$ should be $T_u E$ viewed as $\R^n$. If $Y$ lies on the boundary of $E\times \R^{m}$, then $E'$ is the half-space $\R^n_+ $. By Theorem \ref{lp:self-shrinker}, in both cases, $f_{\infty}$ must be a linear map. Therefore in the first case $S^{\infty}(-1)$ is an $n$-dimensional linear submanifold in $\R^{n_0+m}$ through $0$. In the second case $S^{\infty}(-1)$ is a half space of an $n$-dimensional linear submanifold in $\R^{n_0+m}$. As a result, in the first case, $\Theta(Y,T)=1$ and in the second case, $\Theta(Y,T)=\F{1}{2}$. By \cite{White2005}, the mean curvature flow $F(x,t)$ can be extended smoothly over time $T$ in both cases.
  \end{proof}
\subsection{The limit} Combining Lemma \ref{first:step:MCF} and Lemma \ref{second:step:MCF} together, we conclude that,  under the assumption \eqref{condition:A}, the mean curvature flow in \eqref{MCF3} $F(x,t)$ exists for all time. Moreover $F(x,t)=(x, f(x,t))$, where $f(x,t)$ satisfies $\sup_{E}|Df_{t}|<1-\Sc$ for some constant $\Sc\in (0,1)$ independent of $t$. Therefore, $f(x,t)$ is strictly length decreasing for all $t>0$. \\
\indent  In the closed manifold setting, Wang \cite{Wang2002} showed that the norm of the second fundamental form of $F(x,t)$ decays exponentially as $t\rightarrow \infty$. In our setting such a strong uniform $C^2$ estimate cannot be obtained.\\
\indent In the following, let $\Sigma_t$ denote the graph of $f(x,t)$ for fixed $t$. In fact,  as the area element $\sqrt{g}$ satisfies $ \frac{d\sqrt{g}}{dt}=-|H|^{2}\sqrt{g}$, we see that the quantity  $\int_{	0}^{\infty}\int_{\Sigma_{t}}|H|^{2} d\mu dt$ is bounded by integration. Here $d\mu$ is the induced volume form. So there exists a sequence $\Sigma_{t_{i}}$ such that $\int_{\Sigma_{t_{i}}}|H|^{2}d\mu\rightarrow0$ as $ t_{i}\rightarrow\infty$. \\
\indent Together with the fact that $\sup_{E}|Df_{t_{i}}|<1-\Sc$, up to taking a subsequence, we have that $\Sigma_{t_{i}}\rightarrow\Sigma$ in the sense of varifold such that $\Sigma=(x,f(x))$ with $ \int _{\Sigma}|H|^{2}d\mu=0$, and $\sup_{E}|Df|\leq 1-\Sc$ for some Lipschitz map $f:\bar{E}\rightarrow\R^{m}$ with $f=\psi$ on $\P E$. Namely, $f$ is strictly length decreasing as a weak solution to the minimal surface system. By \cite[Theorem 4.1]{Wang2004} or Theorem \ref{smooth:c-minimal}, $f$ is smooth on $E$.\\
\indent Consequently, we obtain the desirable solution to the minimal surface system \eqref{DP:MSS} under the assumption \eqref{condition:A}. The proof is complete.
	\section{Proof of Theorem \ref{main:thm:B}}\label{Sec  pf thm:B}
The proof of Theorem \ref{main:thm:B} is divided into two parts: the existence part,  Section \ref{sec exist}, and the asymptotic behavior part,  Section \ref{sec asympt}. We emphasize that the boundary $\partial E$ may admit multiple components.

\subsection{The existence}\label{sec exist}
  Without loss of generality, suppose the origin $0$ is in $\R^{n}\setminus \bar{E}$. As a result the distance between the origin and $\P E$,   $d_0=d(0,\P E)>0$. Let $\mathrm B_r(0)\subset\R^{n}$ be the open ball  centered at the origin $0$ with radius $r$. Denote $Q_r:= E \cap\mathrm B_r(0)$. Then, for any $r>2d_0$, the boundary of $Q_r$ consists of two disjoint parts: $\P B_{r}(0)$ and $\P E$.

We claim that if $r$ is sufficiently large, we can choose $\delta_{0}=\delta_{0}(n,\partial E)>0$, which is independent of $r$ to guarantee that our conclusion holds.

Firstly, we fix $\eta_0>0$ such that the distance function $d(x)=d(x,\P E)$  is $C^{2}$ on  $E_{\eta_0}=\{x\in E: d(x,\P E)<\eta_0\}$. Suppose  $\F{r}{2}> \F{r_{0}}{2}:=  diam(\P E)+2\eta_0+d_0 $. Then  $E_{\eta_0}\subset B_{\frac{r}{2}}(0)$. Now the set $Q_{r,\eta_0}=\{x\in Q_{r}: d(x,\P Q_{r})<\eta_0\}$ consists of two disjoint parts: $E_{\eta_0}$ and $E'_{\eta_0} =\{x\in Q_{r}: d(x,\P B_r(0))<\eta_0\}$. Then the function $ \tilde{d}(x) :=d(x,\P Q_{r})$ on $Q_{r,\eta_0}$ satisfies
\begin{equation}
   \tilde{d}( x)=\left\{
\begin{aligned}
   &d(x),\quad \quad x\in E_{\eta_0};\\
   &d(x,\P B_{r}(0)), \quad  x\in E'_{\eta_0},
    \end{aligned}\right.
    \end{equation}
   due to the fact that set $E'_{\eta_0}$ is disjoint with $B_{\frac{r}{2}}(0)$.

  Similarly, we compute $-\Delta_{\Sigma_t} \tilde{d}(x)$ on $Q_{r}$ as in the proof of  Lemma \ref{lap}, where $\Sigma_t$ is the mean curvature flow in $Q_{r}\times \R^m$ in which the initial data is the graph of $\psi$. 
  Since $\P B_{r}(0)$ is strictly convex, arguing as in Remark \ref{delta0rmk} gives
    \begin{equation}
        -\Delta_{\Sigma_t}  \tilde{d}=-\Delta_{\Sigma_t}d(x,\P B_{r}(0))\geq 0 \quad \text{on}\  E'_{\eta_0}
       \end{equation}
  On the other hand, from Lemma \ref{lap}, there exists a constant $c_{0}=c_0(n,\partial E)>0$ which is independent of $r>r_0$, such that
   \begin{equation}
      -\Delta_{\Sigma_t}  \tilde{d}=-\Delta_{\Sigma_t} d\geq -c_0 \quad \text{on}\  E_{\eta_0}.
  \end{equation}
  In both cases, we have
  \begin{equation}
      -\Delta_{\Sigma_t} \tilde{d}\geq -c_0 \quad \text{on}\ Q_{r,\eta_0}.
      \end{equation}
By Remark \ref{remark:definitioneta}, the claim follows. 

Now we can prove verbatim as in Proposition \ref{keyLboundary:estimate} that there exists  $\tilde{\delta}_{0}=\tilde{ \delta}_{0}(n,\partial  E )>0$ such that for any $\delta\in (0,\tilde{\delta}_0)$,
\begin{align*}
    |Df_{t}(x)| \leq &    \frac{ w(\psi)}{\delta} +  |D\psi |+16n(1+\mu) \delta   |D^{2} \psi |, \quad \mbox{on} ~\partial Q_r \times [0,T),
\end{align*}
where $\mu=\sup_{Q_r \times [0,T)}|Df |^{2}$ and $\Sigma_t=(x,f_t(x))$ is the mean curvature flow on time interval $[0,T)$.

Arguing as in the proof of Theorem \ref{main:thm:A}, for any $\delta\in (0, \tilde{\delta}_0)$ and $\psi$ satisfying assumption \eqref{condition:B},  i.e.
\begin{equation}\label{condition:D}
    \F{w
        (\psi)}{\delta}+\sup
    _{E}|D\psi|+32 n\delta\sup_{E}|D^2\psi|<1-c,
\end{equation}
 the Dirichlet problem of the MSS \eqref{DP:MSS} admits a solution $f_r\in C^\infty(Q_r)\cap C(\bar{Q}_r)$ solving
\begin{equation}\label{DP:MSS:Q}
    \left\{
    \begin{aligned}
        g^{ij}(f_{r})(f_{r})^A_{ij}&=0\quad A=1,2,\cdots,m\quad\text{on} ~Q_r,\\
        f _{r}&=\psi \text{\quad on~}\P Q_r.
    \end{aligned}
    \right.
\end{equation}
Moreover, $\max_{Q_r}|Df_r|<1-c$. Also by the maximum principle, $\max_{Q_r}|f_r|\leq \max_{\bar{Q}_r}|\psi|$.\\
\indent By the Arzel\`{a}–Ascoli theorem, there exists a subsequence   $\{f_{r_i}\}_{i=1}^\infty$ converging to a {\Lp} function $f$ as the weak-solution to the following minimal surface system:
\begin{equation}
	\left\{
	\begin{aligned}
		g^{ij}(f ) f^A_{ij}&=0\quad A=1,2,\cdots,m\quad\text{on} ~E,\\
		f  &=\psi \text{\quad on~}\P E.
	\end{aligned}
	\right.
\end{equation}
Moreover $\sup_{E}|D f|\leq 1-c$ on $E$. Now $f$ is {\Lp} and strictly length decreasing on $E$. By \cite[Theorem 4.1]{Wang2004} or Theorem \ref{smooth:B}, $f$ is smooth on $E$. This gives the existence of Theorem \ref{main:thm:B}.

\subsection{The asymptotic behavior}\label{sec asympt}
We consider the asymptotic behavior of the solution $f$ to the MSS \eqref{DP:MSS} over $E$. Notice that the singular values  satisfy $\sup_{E}|Df|\leq 1-c$.
Now we blow down $\Gamma(f)$, the graph of $f$, following the idea of Simon \cite{Simon1983}. Namely consider the function $f_{k}(x):=k f(k^{-1}x)$ defined over $kE =\{y\in \R^n:ky\in E\}$. Let $\Gamma(f_{k})$ denote the graph of $f_k$ over $kE$.\\
\indent  Applying the classical compactness of stationary varifolds, there exists a subsequence $k_i\rightarrow\infty$ as $ i\rightarrow\infty$,  such that the sequence $\Gamma(f_{k_{i}})$ converges to a minimal cone $T$. Moreover $T$ is a graph of a strictly length decreasing Lipschitz function $f_\infty$, with $|Df_\infty|<1-c$ as the weak solution to the minimal surface system. By Theorem \ref{smooth:B}, $f_{\infty}$ is a linear map on $\R^{n}$. This implies that $\sup_{|x|\rightarrow +\infty}|Df-l|=0$, where $Df$ means the gradient of $f$. The proof is complete.

		\bibliographystyle{alpha}
		\bibliography{Ref-hc-Drichlet-problem}

\begin{thebibliography}{XYZ19}

\bibitem[All75]{Allard1975}
W.~K. Allard.
\newblock On the first variation of a varifold: boundary behavior.
\newblock {\em Ann. of Math. (2)}, 101:418–446, 1975.

\bibitem[CM12]{CM12}
Tobias~H. Colding and William~P. Minicozzi, II.
\newblock Generic mean curvature flow {I}: generic singularities.
\newblock {\em Ann. of Math. (2)}, 175(2):755--833, 2012.

\bibitem[CZ13]{CZ13}
X.~Cheng and D.-T Zhou.
\newblock Volume estimate about shrinkers.
\newblock {\em Proc. Amer. Math. Soc.}, 141(2):687--696, 2013.

\bibitem[DJX23]{Ding-Jost-Xin2023}
Q.~Ding, J.~Jost, and Y.-L. Xin.
\newblock Existence and non-existence of minimal graphs.
\newblock {\em J. Math. Pures Appl. (9)}, 179:391–424, 2023.

\bibitem[DJX25]{DJX25}
Q.~Ding, J.~Jost, and Y.-L. Xin.
\newblock Boundary regularity for mean curvature flows of higher codimension.
\newblock {\em Sci. China Math.}, 68(6):1419--1430, 2025.

\bibitem[DW11]{DW11}
Q.~Ding and Z.~Wang.
\newblock On the self-shrinking systems in arbitrary codimension spaces, 2011.

\bibitem[DX13]{DX13}
Q.~Ding and Y.-L. Xin.
\newblock Volume growth, eigenvalue and compactness for self-shrinkers.
\newblock {\em Asian J. Math.}, 17(3):443--456, 2013.

\bibitem[FC80]{Fischer80}
D.~Fischer-Colbrie.
\newblock Some rigidity theorems for minimal submanifolds of the sphere.
\newblock {\em Acta Math.}, 145(1-2):29--46, 1980.

\bibitem[Ham82]{Hamilton1982}
R.~S. Hamilton.
\newblock Three-manifolds with positive {Ricci} curvature.
\newblock {\em J. Differential Geom.}, 17(2):255–306, 1982.

\bibitem[Ham86]{Hamilton1986}
R.~S. Hamilton.
\newblock Four-manifolds with positive curvature operator.
\newblock {\em J. Differential Geom.}, 24(2):153–179, 1986.

\bibitem[JS68]{Jenkins-Serrin1968}
H.~Jenkins and J.~Serrin.
\newblock The {Dirichlet} problem for the minimal surface equation in higher
  dimensions.
\newblock {\em J. Reine Angew. Math.}, 229:170–187, 1968.

\bibitem[Lie96]{Lieberman1996}
G.~M. Lieberman.
\newblock {\em Second order parabolic differential equations.}
\newblock World Scientific, River Edge, N.J., 1996.

\bibitem[LO77]{Lawson-Osserman1977}
Jr. H.~B. Lawson and R.~Osserman.
\newblock Non-existence, non-uniqueness and irregularity of solutions to the
  minimal surface system.
\newblock {\em Acta Math.}, 139(1-2):1–17, 1977.

\bibitem[LW08]{Lee-Wang2008}
Y.-I Lee and M.-T. Wang.
\newblock A note on the stability and uniqueness for solutions to the minimal
  surface system.
\newblock {\em Math. Res. Lett.}, 15(1):197–206, 2008.

\bibitem[Sim87]{Simon1983}
L.~Simon.
\newblock Asymptotic behaviour of minimal graphs over exterior domains.
\newblock {\em Ann. Inst. H. Poincaré Anal. Non Linéaire}, 4(3):231–242,
  1987.

\bibitem[Tho12]{Thorpe12}
B.~S. Thorpe.
\newblock The maximal graph {D}irichlet problem in semi-{E}uclidean spaces.
\newblock {\em Comm. Anal. Geom.}, 20(2):255--270, 2012.

\bibitem[TW04]{Tsui-Wang2004}
M.-P. Tsui and M.-T. Wang.
\newblock Mean curvature flows and isotopy of maps between spheres.
\newblock {\em Comm. Pure Appl. Math.}, 57(8):1110–1126, 2004.

\bibitem[Wan02]{Wang2002}
M.-T. Wang.
\newblock Long-time existence and convergence of graphic mean curvature flow in
  arbitrary codimension.
\newblock {\em Invent. Math.}, 148(3):525–543, 2002.

\bibitem[Wan03]{Wang2003}
M.-T. Wang.
\newblock On graphic {B}ernstein type results in higher codimension.
\newblock {\em Trans. Amer. Math. Soc.}, 355(1):265–271, 2003.

\bibitem[Wan04]{Wang2004}
M.-T. Wang.
\newblock The {Dirichlet} problem for the minimal surface system in arbitrary
  dimensions and codimensions.
\newblock {\em Comm. Pure Appl. Math.}, 57(2):267–281, 2004.

\bibitem[Whi05]{White2005}
B.~White.
\newblock A local regularity theorem for mean curvature flow.
\newblock {\em Ann. of Math. (2)}, 161(3):1487–1519, 2005.

\bibitem[Whi21]{White21}
B.~White.
\newblock Mean curvature flow with boundary.
\newblock {\em Ars Inven. Anal.}, pages Paper No. 4, 43, 2021.

\bibitem[Wil84]{Williams1984}
G.~H. Williams.
\newblock The {Dirichlet} problem for the minimal surface equation with
  {L}ipschitz continuous boundary data.
\newblock {\em J. Reine Angew. Math.}, 354:123–140, 1984.

\bibitem[Xin21]{Xin21}
Y.~Xin.
\newblock Minimal submanifolds of higher codimension.
\newblock {\em Vietnam J. Math.}, 49(2):473--480, 2021.

\bibitem[XYZ19]{XuYangZhang2019}
X.-W. Xu, L.~Yang, and Y.-S. Zhang.
\newblock Dirichlet boundary values on {E}uclidean balls with infinitely many
  solutions for the minimal surface system.
\newblock {\em J. Math. Pures Appl. (9)}, 129:266–300, 2019.

\bibitem[Yua02]{Yuan2002}
Y.~Yuan.
\newblock A {Bernstein} problem for special {Lagrangian} equations.
\newblock {\em Invent. math.}, 150:117–125, 2002.

\bibitem[Zha18]{Zhang2018}
Y.-S. Zhang.
\newblock Recent progress on the {Dirichlet} problem for the minimal surface
  system and minimal cones.
\newblock {\em Proceedings of the International Consortium of Chinese
  Mathematicians}, page 385–397, 2018.

\bibitem[Zho15]{Zhou15}
H.-Y. Zhou.
\newblock {\em Some {B}ernstein {T}ype {R}esults of {G}raphical
  {S}elf-{S}hrinkers with {H}igh {C}odimension in {E}uclidean {S}pace}.
\newblock ProQuest LLC, Ann Arbor, MI, 2015.
\newblock Thesis (Ph.D.)--City University of New York.

\bibitem[Zho18]{Zhou18a-d}
H.-Y. Zhou.
\newblock A {B}ernstein type result for graphical self-shrinkers in {$\Bbb
  R^4$}.
\newblock {\em Int. Math. Res. Not. IMRN}, (21):6798--6815, 2018.

\end{thebibliography}

	\end{document}